\title[Cohomology of fixed point varieties in Hilbert schemes of points on a surface]{Cohomology of fixed point sets of anti-symplectic involutions in the Hilbert scheme of points on a surface}
\author{Thomas John Baird}
\definecolor{tocolor}{rgb}{.1,.1,.5}
\definecolor{urlcolor}{rgb}{.2,.2,.6}
\definecolor{linkcolor}{rgb}{.1,.4,.6}
\definecolor{citecolor}{rgb}{.6,.3,.1}
\newcommand{\Z}{\mathbb{Z}}
\newcommand{\C}{\mathbb{C}}
\newcommand{\Q}{\mathbb{Q}}
\newcommand{\PP}{\mathscr{P}}
\newcommand{\codim}{\textnormal{codim}}
\DeclareMathOperator{\supp}{\textnormal{Supp}}
\newtheorem{thm}{Theorem}[section]
\newtheorem{cor}[thm]{Corollary}
\newtheorem{lem}[thm]{Lemma}
\newtheorem{prop}[thm]{Proposition}
\theoremstyle{definition}
\numberwithin{equation}{section}
\begin{document}

\begin{abstract}

Let $S$ be a smooth, quasi-projective complex surface with complex symplectic form $\omega \in H^0(S, K_S)$.  This determines a symplectic form $\omega_n$ on the Hilbert scheme of points $S^{[n]}$ for $n \geq 1$.  Let $\tau$ be an  anti-symplectic involution of $(S,\omega)$:  an order two automorphism of $S$ such that $ \tau^*\omega=-\omega$. Then $\tau$ induces an anti-symplectic involution on $(S^{[n]},\omega_n)$ and the fixed point set $(S^{[n]})^\tau$ is a smooth Lagrangian subvariety of $S^{[n]}$.  In this paper, we calculate the mixed Hodge structure of $H^*( (S^{[n]})^\tau; \Q)$ in terms of the mixed Hodge structures of $H^*( S^\tau;\Q)$ and of $H^*( S / \tau; \Q)$.  We also classify the connected components of $(S^{[n]})^\tau$ and determine their mixed Hodge structures.  Our results apply more generally whenever $S$ is a smooth quasi-projective surface, and $\tau$ is an involution of $S$ for which $S^\tau$ is a curve.  
\end{abstract}

\maketitle


\section{Introduction}

Let $S$ be a smooth, complex quasi-projective surface.  A branching involution on $S$ is an automorphism $\tau \in Aut(S)$ of order two such that the fixed point set $C:= S^\tau$ is a curve. This implies that the quotient variety $ B := S/\tau$ is also smooth (Prop. \ref{branchinvo}) and the quotient morphism $q: S \rightarrow B$ is two-fold cover branched along $C$.  For example, if $(S,\omega)$ is symplectic surface, then an anti-symplectic involution on $(S,\omega)$ must be a branching involution.

Let $S^{[n]}$ be the Hilbert scheme of $n$ points in $S$. It is a smooth quasi-projective variety of dimension $2n$. A branching involution on $S$ induces an involution on $S^{[n]}$ with fixed point set $(S^{[n]})^\tau$ which is smooth of dimension $n$.  If $(S,\omega,\tau)$ is a symplectic surface with anti-symplectic involution, then $S^{[n]}$ inherits a symplectic form and an anti-symplectic involution, and the fixed point set $(S^{[n]})^\tau$ is a complex Lagrangian submanifold of in $S^{[n]}$.  If $C = \emptyset$, then $(S^{[2n]})^\tau \cong B^{[n]}$ and $(S^{[2n-1]})^\tau \cong \emptyset$ for $n \geq 1$. However, if $C \neq \emptyset$, then $(S^{[2n]})^\tau$ has more interesting geometry.

In this paper we calculate the cohomology groups $H^*( (S^{[n]})^\tau; \Q)$ with mixed Hodge structure, in terms of the cohomology of $B$ and $C$. We also classify the connected components of $(S^{[n]})^\tau$ and calculate the mixed Hodge structure of their cohomology. For a variety $X$, the mixed Hodge polynomial is defined $$MH(X) = MH(X; x,y,t) :=  \sum_{i,j,k} h_{i,j,k}(X) x^i y^j t^k$$ where $h_{i,j,k}(X)$ are mixed Hodge numbers (see (\ref{Hodgenumbers})). Our main theorem is a generating function for the mixed Hodge polynomial of $(S^{[n]})^\tau$.

\begin{thm}\label{mainthm}
If $(S,\tau)$ is a smooth quasi-projective surface equipped with a branching involution $\tau$ with $B = S/\tau$ and $C= S^\tau$, then
\begin{equation}\label{mainthmformula}
 \sum_{n=0}^\infty u^n MH( (S^{[n]})^\tau) = \Phi_2(B) \Phi_1(C)
 \end{equation}
where 
$$  \Phi_a(X) :=   \prod_{i,j,k, m \geq 0} \left(  1- (-1)^{k} x^{i+m} y^{j+m} t^{k+2m} u^{a+2m}  \right)^{(-1)^{k+1}h_{i,j,k}(X)}.  $$
\end{thm}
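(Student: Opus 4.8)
The plan is to understand $(S^{[n]})^\tau$ through the Hilbert--Chow morphism, reduce the computation to the cohomology of a few punctual models, and assemble the answer with a G\"ottsche--Soergel type argument. First I would record the structure of a $\tau$-fixed subscheme. If $Z\in(S^{[n]})^\tau$, then, since $2$ is invertible, the induced $\tau$-action splits $\mathcal O_Z=\mathcal O_Z^+\oplus\mathcal O_Z^-$; one checks that $\mathcal O_Z^+$ is a cyclic $\mathcal O_B$-module and $\mathcal O_Z^-$ is cyclic over the odd eigensheaf of $q_*\mathcal O_S$, so $Z$ is equivalent to a pair of nested subschemes $Z^-\subseteq Z^+$ of $B$ with $\ell(Z^+)+\ell(Z^-)=n$ and $Z^+/Z^-$ supported on the branch curve $C_B\cong C$. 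Dually, decomposing by support gives a locally closed stratification: the part of $(S^{[n]})^\tau$ supported away from $C$ is isomorphic, via $W\mapsto q^{-1}(W)$, to the Hilbert scheme $(B\setminus C)^{[b]}$ where $2b$ is its length (using that $q$ is \'etale over $B\setminus C$), while the part supported on $C$ is a "$\tau$-equivariant punctual" Hilbert scheme fibred over $C$. Both pictures funnel into the proper morphism $\rho_n\colon (S^{[n]})^\tau\to(S^{(n)})^\tau$ onto the closed subvariety of $\tau$-invariant effective $0$-cycles, which itself decomposes as $\bigsqcup_{a+2b=n}C^{(a)}\times(B\setminus C)^{(b)}$ according to whether support points lie on $C$ or occur in $\tau$-orbits off $C$.

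The heart of the argument is the analysis of $\rho_n$. I would verify that $\rho_n$ is semismall --- by the same fibre-dimension bookkeeping as for Hilbert--Chow of a surface, once the local fibres are understood --- and apply the decomposition theorem, so that $R\rho_{n*}\Q$ is a direct sum of shifted intersection complexes on closures of strata, with coefficients given by the top cohomology of the punctual fibres. The two relevant fibres are: over a point of $B\setminus C$ of multiplicity $k$, the punctual Hilbert scheme $\mathrm{Hilb}^k(\mathbb C^2,0)$, with its classical cell structure; and over a point of $C$ of multiplicity $k$, the variety $H_k$ of $\tau$-stable colength-$k$ ideals of $\mathbb C[[x,y]]$ with $\tau=\diag(1,-1)$. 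To compute $H_k$ I would run a $\tau$-equivariant $\mathbb G_m$-action scaling the coordinates and take the associated Bia\l ynicki--Birula decomposition, which breaks $H_k$ into affine cells indexed by its $\tau$-fixed monomial ideals; this shows $H_k$ has cohomology that is pure, of Tate type and concentrated in even degree, with a generating function $\sum_k[H_k]u^k$ to be matched against the local factor of $\Phi_1(\mathrm{pt})$. The key geometric point is that for even $k$ some of these cells correspond to the direction normal to $C$ --- these are limits of $\tau$-orbits collapsing onto $C$ --- and must be reassigned from the $C$-contribution to the $B$-contribution; already $k=2$ exhibits the mechanism, since $\mathrm{Hilb}^2(\mathbb C^2,0)=\mathbb P^1$ and $H_2$ is its two $\tau$-fixed points, one tangent and one normal to $C$. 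Feeding this into the G\"ottsche--Soergel summation over symmetric products, the strata over $B\setminus C$ together with the normal cells over $C$ reorganise into an infinite product over points of $B$ weighted by $h_{i,j,k}(B)$, and the tangential cells over $C$ into an infinite product over points of $C$; matching the local series identifies these with $\Phi_2(B)$ and $\Phi_1(C)$, proving \eqref{mainthmformula}. As a consistency check, $\Phi_2(B)$ is exactly G\"ottsche's generating function for $B^{[\bullet]}$ with $u^2$ in place of the Hilbert-scheme variable, recovering the isomorphism $(S^{[2b]})^\tau\cong B^{[b]}$ when $C=\emptyset$, while $\Phi_1(C)$ is its curve analogue with all $u$-degrees made odd. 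By multiplicativity of both sides under $S\mapsto S_1\sqcup S_2$ it suffices to treat $S$ connected, and the local models only involve the analytic model $(\mathbb C^2,\diag(1,-1))$.

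The step I expect to be the main obstacle is the passage from the (easy) $E$-polynomial identity --- which follows from additivity over the stratification together with the two punctual point-counts --- to the full mixed Hodge polynomial: this requires knowing that $\rho_n$ is semismall and that the intersection-complex summands carry no hidden monodromy, so that the tangential and normal cells over $C$ are not mixed and the mixed Hodge structure genuinely computes stratum by stratum, after which the plethystic bookkeeping must be carried out so the contributions land precisely on $\Phi_2(B)\Phi_1(C)$ with the correct shifts $u^{a+2m}$ and signs $(-1)^{k+1}h_{i,j,k}$. In particular, getting $\Phi_2$ to see all of $H^*(B)$ rather than only $H^*(B\setminus C)$ --- equivalently, correctly accounting for the closures of the off-$C$ strata as orbits collide onto $C$ --- is exactly where the perverse-sheaf input is indispensable.
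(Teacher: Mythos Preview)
Your overall strategy---semismall restricted Hilbert--Chow morphism $\pi_\tau\colon (S^{[n]})^\tau \to (S^{(n)})^\tau$, decomposition theorem, and a $\tau$-equivariant Bia\l ynicki--Birula analysis of the punctual fibre $(F^m)^\tau$---is exactly the paper's approach. But the mechanism you describe for assembling the pieces is not right, and the gap is where you invoke a ``reassignment of normal cells from $C$ to $B$.'' The decomposition theorem for semismall maps does not manipulate cells inside fibres; it outputs IC sheaves on closures of the \emph{relevant} strata, and the whole argument turns on identifying which strata those are.

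Here is what actually happens. Stratify $(S^{(n)})^\tau$ by pairs $(\lambda,\mu)$ recording multiplicities of free $\tau$-orbits and of points on $C$; the fibre over such a stratum is $\prod_i F^{\lambda_i}\times\prod_j (F^{\mu_j})^\tau$. The Bia\l ynicki--Birula computation gives $\dim (F^m)^\tau=\lfloor (m-1)/2\rfloor$, which meets the semismall bound $(m-1)/2$ exactly when $m$ is odd and falls strictly short when $m$ is even. Hence a stratum is relevant if and only if every part of $\mu$ is odd; this single parity observation is what forces only odd $u$-exponents in $\Phi_1(C)$---the even-$\mu$ strata simply contribute no summand, and nothing is reassigned. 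For each relevant stratum, the closure receives a finite birational map from $B^{(\lambda)}\times C^{(\mu)}$ (sending $b\in B$ to the cycle $q^{-1}(b)$), so by the standard lemmas its IC sheaf is the pushforward of the shifted constant sheaf on that product. The appearance of all of $H^*(B)$ rather than $H^*(B\setminus C)$ is therefore automatic: it comes from working with the closure, which already contains the collisions of $\tau$-orbits onto $C$, not from any bookkeeping with tangent versus normal cells in $(F^2)^\tau$. Your nested-subscheme description of $\tau$-fixed $Z$ via $\mathcal O_Z^{\pm}$ is an interesting alternative viewpoint but plays no role in the argument.
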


This can be compared with the mixed Hodge polynomial formula for $S^{[n]}$ calculated by G\"ottsche-Soergel \cite{GS}

\begin{equation}\label{soehur}
 \sum_{n=0}^\infty u^n MH(S^{[n]}) =  \prod_{i,j,k, m \geq 0} \left(  1- (-1)^{k} x^{i+m} y^{j+m} t^{k+2m} u^{1+m}  \right)^{(-1)^{k+1}h_{i,j,k}(S)}.
 \end{equation}
 
Formula (\ref{soehur}) was later explained by Nakajima \cite{N2} and Grojnowski \cite{G} by proving that $$ \mathbb{H}^{*,*}  := \bigoplus_{n\geq 0, k\geq 0} H^k( S^{[n]})$$ is the Fock space for a certain Heisenberg superalgebra associated to $H^*(S)$ with its intersection pairing. In particular, as a bigraded-vector space, $\mathbb{H}$ is isomorphic to the graded symmetric algebra $S^*(V)$ generated by the super vector space $V = V^{even} \oplus V^{odd}$ with $$V^{even/odd} = \Q \{p_1,p_2,...\}  \otimes H^{even/odd}(S)$$ where $p_n \otimes H^k(S)$ has bidegree $(n, 2n+k-2)$. Theorem \ref{mainthm} implies an isomorphism of bigraded vector spaces $$\mathbb{H}^{*,*}_\tau  := \bigoplus_{n\geq 0, k\geq 0} H^k( (S^{[n]})^\tau) \cong S^*(W),$$ with $$ W^{even/odd} :=\left( \Q\{q_2, q_4,...\} \otimes H^{even/odd}(B) \right) \oplus \left( \Q\{q_1, q_3,...\} \otimes H^{even/odd}(C) \right),$$
where $ q_n \otimes H^k(B) $ has bidegree $(n, n +k-2)$ and $ q_n \otimes H^k(C)$ has bidegree $(n, n+k-1)$. This suggests that $ \mathbb{H}_\tau$ may be the Fock space for a Heisenberg superalgebra determined by $H^*(B)$ and $H^*(C)$.  However, we do not investigate this question in the current paper.

To prove Theorem \ref{mainthm}, we modify the calculation by G\"ottsche-Soergel of $H^*(S^{[n]})$ in \cite{GS}. The Hilbert-Chow morphism $\pi: S^{[n]} \rightarrow S^{(n)}$ is semismall, which implies that the derived pushforward of the constant sheaf $\Q_{S^{[n]} }$ is perverse (up to a degree shift) and decomposes into a direct sum of intersection cohomology sheaves supported on certain subvarieties of the symmetric product $S^{(n)}$. The cohomology of these intersection cohomology sheaves can then be calculated in terms of the cohomology of symmetric products of $S$.  

Restricting to $\tau$-fixed points, we show that $ \pi_\tau: (S^{[n]})^{\tau} \rightarrow (S^{(n)})^\tau$ is also semismall. A similar argument then allows us to calculate $H^*( (S^{[n]})^\tau)$ in terms of the cohomology of symmetric products of $B$ and of $C$.

We briefly outline the paper.  

In \S \ref{Hilbert schemes and the Hilbert-Chow morphism},  we review Hilbert schemes and the Hilbert-Chow morphism.

In \S \ref{Branching involutions and fixed point Lagrangians}, we consider branching involutions and anti-symplectic involutions.

In \S \ref{The case S= C2}, we consider the case $S= \C^2$ with $\tau(x,y) = (-x,y)$. We calculate the Betti numbers of $(S^{[n]})^\tau$ using a Bialynicki-Birula stratification similar to the method used for $S^{[n]}$ by Ellingsrud and Str\o mme \cite{ES}.  In particular, we determine the dimension and cohomology groups of the Hilbert-Chow fibre $(F^n)^\tau:= \pi_\tau^{-1}( n (0,0))$. 

In \S \ref{Mixed Hodge structures}, we tackle the general case and prove Theorem \ref{mainthm}. This relies on applying the Decomposition Theorem for semismall morphisms to the restricted Hilbert-Chow morphism $\pi_\tau:  (S^{[n]})^\tau \rightarrow (S^{(n)})^\tau$. 

In \S \ref{Connected Components of}, we classify the connected components of $(S^{[n]})^\tau$ and calculate their mixed Hodge structures.

In \S \ref{Examples of anti-symplectic involutions on surfaces} we consider some examples and applications.

\textbf{Acknowledgements:} This research was supported by NSERC Discovery Grant RGPIN-2022-04908.

\section{Hilbert schemes and the Hilbert-Chow morphism}\label{Hilbert schemes and the Hilbert-Chow morphism}

For a complex quasi-projective variety $Y$ and $n\geq 1$, denote the symmetric product $ Y^{(n)} = Y^n/ \mathcal{S}_n$ where the symmetric group $\mathcal{S}_n$ acts on the product $Y^n$ by permuting factors. The variety $Y^{(n)}$ parametrizes effective 0-cycles of degree $n$, and its points can be written as formal sums $ \sum_{i} \nu_i y_i$  where $y_i \in Y$, $\nu_i \geq 0$ and $\sum \nu_i = n$.

If $X$ is a quasi-projective scheme over $\C$, then the Hilbert scheme of $n$ points on $X$, denoted $X^{[n]}$ or $Hilb^n(X)$, is the scheme that represents the functor
$$\underline{Hilb}_X^n:  Schemes \mapsto Sets$$
where 
$ \underline{Hilb}_X^n(U)$ is the set of closed subschemes $ Z \subset  X \times U$ for which the projection to $U$ is finite flat and surjective of degree $n$.  For a ring $R$ we write $Hilb^n(R)$ for $Hilb^n(Spec(R))$.

The closed points of $X^{[n]}$ correspond to closed subschemes $Z$ of $X$ of length $n$, meaning that the ring of functions $\mathcal{O}_Z(Z) $ is an algebra of dimension $n$ over $\C$. Any $Z \in X^{[n]}$ must be supported at finitely many points in $X$. The Hilbert-Chow morphism $\pi: X^{[n]}_{red} \rightarrow X^{(n)}$ is defined, at the level of sets, by $\pi(Z) :=  \sum_{x \in X}  \left( \dim_\C \mathcal{O}_{Z,x} \right) x$. 

It will be important to understand the fibres of $\pi :  S^{[n]}\rightarrow S^{(n)}$ when $S$ is a smooth quasi-projective complex surface (in this case $S^{[n]} = S^{[n]}_{red}$ is already reduced).  If $p \in S$ is a (closed) point, let $\mathcal{O}_p$ be its local ring with maximal ideal $\mathfrak{m}_p$.  Closed subschemes of $S$ of length $n$ supported at $p$ correspond to subschemes of length $n$ in $\mathcal{O}_p$, so there is a bijection $\pi^{-1}(n p) \cong Hilb^n(\mathcal{O}_p/ \mathfrak{m}_p^n)$. Since $S$ is a smooth surface, there exists a ring isomorphism $\mathcal{O}_p/ \mathfrak{m}_p^n  \cong \C[x,y]/(x,y)^n$. This determines a set-theoretic isomorphism 
$$F^n:=  Hilb^n(\C[x,y]/(x,y)^n)  \cong  \pi^{-1}(n p). $$
More generally, if $\sum_{i=1}^k \nu_i p_i  \in S^{(n)}$ with the $p_i$ distinct, then there is an isomorphism (\cite{F}  Prop. 2.3) $$ \pi^{-1}\left(\sum \nu_i p_i \right) \cong F^{\nu_1} \times...\times F^{\nu_k}.$$      

Suppose that $\tau$ is a branching involution of $S$. Denote $\pi_\tau$ the restriction of the Hilbert-Chow morphism to $\tau$-fixed points, $\pi_\tau:  (S^{[n]})^\tau \rightarrow (S^{(n)})^\tau$.  Elements of $(S^{(n)})^\tau$ have the form $$D:=  \sum_{i=1}^l  \lambda_i  (p_i + \tau(p_i))  + \sum_{j=1}^m \mu_j q_j,$$ where  $p_1,...,p_l, \tau(p_1),...,\tau(p_l), q_1,...,q_m \in S $ are all distinct points, $\tau(q_j) = q_j$ for all $j$, and the $\lambda_i, \mu_j$ are positive integers such that $ \sum_i 2\lambda_i + \sum_j \mu_j = n$. The fibre of $\pi_\tau$ above such a point is isomorphic to 
$$ \pi_\tau^{-1}(D) =  (\pi^{-1}(D) )^\tau  \cong  F^{\lambda_1} \times ... \times F^{\lambda_l} \times (F^{\mu_1})^\tau \times ...\times (F^{\mu_m})^\tau$$
where $(F^n)^\tau$ is the fixed point set of the action on $F^n = Hilb^n( \C[x,y]/\mu^n)$ under the automorphism $(x,y) \mapsto (-x,y)$.  We investigate $(F^n)^\tau$ in section \ref{The case S= C2}.

\section{Branching involutions and fixed point Lagrangians}\label{Branching involutions and fixed point Lagrangians}

Let $S$ be a smooth, connected, quasi-projective complex surface and let $S^{[n]}$ be its Hilbert scheme of points, which is smooth of dimension $2n$ \cite{F}. If $S$ admits a symplectic form $\omega \in H^0(X, \Omega^2)$, then there is an induced symplectic form $\omega_n$ on $S^{[n]}$ for all $n \geq 1$ \cite{B}.   An anti-symplectic involution on $(S,\omega)$ is an automorphism $\tau: S \rightarrow S$ such that $\tau \circ \tau = Id_S$ and $\tau^*(\omega) = -\omega$.

\begin{prop}\label{propforref}
If $(S,\omega,\tau)$ is as above, then the induced automorphism $\tau: S^{[n]}\rightarrow S^{[n]}$ satisfies $\tau^*( \omega_n) = - \omega_n$. Consequently, the fixed point variety $ (S^{[n]})^\tau$ is a smooth Lagrangian subvariety of $(S^{[n]}, \omega_n)$ of dimension $n$.
\end{prop}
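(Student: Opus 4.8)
The plan is to prove the identity $\tau^*\omega_n = -\omega_n$ by reducing to the open dense locus of reduced subschemes, and then to deduce the Lagrangian property from pointwise linear algebra on tangent spaces.

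\emph{Step 1: the sign identity.} The automorphism of $S^{[n]}$ induced by $\tau$ sends a length-$n$ subscheme $Z \subset S$ to $\tau(Z)$, and it is compatible, via the Hilbert--Chow morphism $\pi$, with the automorphism of $S^{(n)}$ induced by the diagonal automorphism $\tau^{\times n}$ of $S^n$. Recall from \cite{B} that on the open dense subset $U \subset S^{[n]}$ parametrizing reduced subschemes, $\pi$ restricts to an isomorphism onto the configuration locus $S^{(n)}_* \subset S^{(n)}$ of $n$ distinct points, and $\omega_n|_U$ is the pullback of the form on $S^{(n)}_*$ obtained by descending $\omega^{(n)} := \sum_{i=1}^n \mathrm{pr}_i^*\omega$ along the étale quotient map $S^n \setminus \Delta \to S^{(n)}_*$. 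Since $(\tau^{\times n})^*\omega^{(n)} = \sum_i \mathrm{pr}_i^*(\tau^*\omega) = -\omega^{(n)}$, the equivariance of the quotient map, of $\pi$, and of the above isomorphisms gives $\tau^*(\omega_n|_U) = -\omega_n|_U$. Now $\omega_n$ and $\tau^*\omega_n$ are both regular sections of $\Omega^2_{S^{[n]}}$ over the irreducible smooth variety $S^{[n]}$ (\cite{F}), and $U$ is dense, so the identity $\tau^*\omega_n = -\omega_n$ holds on all of $S^{[n]}$.

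\emph{Step 2: smoothness and dimension of the fixed locus.} The fixed point scheme of the order-two automorphism $\tau$ of the smooth variety $S^{[n]}$ is smooth: near a fixed point one chooses a $\tau$-invariant affine neighbourhood together with a $\tau$-equivariant closed embedding into a linear representation, so that the fixed locus is cut out locally by the linear equations defining the $(-1)$-eigenspace of $\tau$ on that representation. Hence $(S^{[n]})^\tau$ is a smooth closed subvariety whose local dimension at a fixed point $p$ equals $\dim T_p(S^{[n]})^\tau = \dim (T_pS^{[n]})^{+}$, the dimension of the $(+1)$-eigenspace of $d\tau_p$.

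\emph{Step 3: the Lagrangian property.} Fix a fixed point $p$, write $V = T_pS^{[n]}$, let $\sigma = (\omega_n)_p$ be the induced nondegenerate alternating form on $V$, and let $\phi = d\tau_p$, a linear involution with $\phi^*\sigma = -\sigma$ by Step 1. Decompose $V = V^+ \oplus V^-$ into eigenspaces of $\phi$. For $v,w \in V^+$ one has $\sigma(v,w) = \sigma(\phi v, \phi w) = (\phi^*\sigma)(v,w) = -\sigma(v,w)$, so $\sigma(v,w) = 0$; thus $V^+$ is isotropic, and likewise $V^-$. Since $\dim V^+ + \dim V^- = 2n$ and every isotropic subspace of the $2n$-dimensional symplectic space $(V,\sigma)$ has dimension at most $n$, both equal $n$ and $V^+$ is Lagrangian. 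Therefore $(S^{[n]})^\tau$ is smooth of pure dimension $n = \tfrac12 \dim S^{[n]}$, and as all its tangent spaces are isotropic for $\omega_n$, the restriction of $\omega_n$ to it vanishes; that is, $(S^{[n]})^\tau$ is a Lagrangian subvariety of $(S^{[n]},\omega_n)$.

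The only delicate point is Step 1, where one must invoke the precise description of $\omega_n$ on the reduced locus and the fact that it is a globally defined regular $2$-form; once these are in hand, the sign computation and the density argument are immediate, and Steps 2 and 3 are formal.
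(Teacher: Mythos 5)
Your proof is correct and Step 1 is essentially the paper's own argument: descend $\sum_i \rho_i^*\omega$ to the locus of $n$ distinct points, observe the sign there, and extend by density of that locus in the irreducible smooth variety $S^{[n]}$. Steps 2 and 3 supply the standard linearization and eigenspace linear algebra that the paper leaves implicit in the word ``Consequently,'' and they are also correct.
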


\begin{proof}
Let $S^{(n)}_*$ be the subset of $S^{(n)}$ consisting of sums $\sum_{i=1}^n p_i$ with the $p_i$ distinct. Denote by $S^{[n]}_*$  the preimage of $S^{(n)}_*$ under the Hilbert-Chow morphism $\pi: S^{[n]}\rightarrow S^{(n)}$.  Then $\pi$ is an isomorphism from $S^{[n]}_*$  to $S^{(n)}_*$. 

Let $S^n = S \times ... \times S$ be the product space and let $\rho_i: S^n \rightarrow S$ be the projection morphisms for $i=1,...,n$. The symplectic form $\omega$ on $S$ determines a symplectic form  $\tilde{\omega}_n :=  \sum_{i=1}^n \rho_i^* \omega $ on $S^n$ which is invariant under the action of the symmetric group, so it descends to a symplectic form $\omega_{n,*}$ on $S^{(n)}_* \cong S^{[n]}_*$, and $\omega_n$ is the unique extension of this form to $S^{[n]}$.

The action of $\tau$ on $S$ induces an involution on  $S^{[n]}$ which restricts to an anti-symplectic involution on $(S^{[n]}_*, \omega_{n,*})$ hence it must be anti-symplectic on $(S^{[n]},\omega_n)$.
\end{proof}

More generally, for a smooth surface $S$, we call $\tau \in Aut(S)$ a branching involution if $\tau \circ \tau = Id_S$ and the fixed point locus $S^\tau$ is a (necessarily smooth) curve. Every anti-symplectic involution of a symplectic surface is a branching involution, because the fixed point locus $S^\tau$ is Lagrangian.   

\begin{prop}\label{branchinvo}
Let $S$ be a smooth quasi-projective surface and $\tau \in Aut(S)$ a branching involution. The quotient $B:= S/  \tau$ is a smooth surface, and the quotient map $q: S \rightarrow S/  \tau $ is a branched double cover with branch locus $C:= S^\tau$.
\end{prop}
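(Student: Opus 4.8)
The plan is to reduce the statement to a local computation, since smoothness of the quotient and the branched-cover description are both local on $S$. First I would observe that the quotient map $q: S \to B = S/\tau$ is a finite morphism of degree two, hence $B$ is a quasi-projective variety, and that $q$ is \'etale of degree two precisely over the complement of the image of the fixed locus $C = S^\tau$. Thus $B$ is automatically smooth away from $q(C)$, and the only issue is smoothness of $B$ at points of $q(C)$, together with the local structure of $q$ there.

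Next I would linearize $\tau$ at a fixed point $p \in C$. Because $\tau$ has order two and we are in characteristic zero, the action of $\tau$ on the tangent space $T_pS$ (equivalently on $\mathfrak{m}_p/\mathfrak{m}_p^2$, where $\mathfrak{m}_p \subset \mathcal{O}_{S,p}$ is the maximal ideal) is diagonalizable with eigenvalues $\pm 1$. Since $S^\tau$ is a curve, its tangent space at $p$ is one-dimensional, so the $+1$-eigenspace is $1$-dimensional and the $-1$-eigenspace is $1$-dimensional; in particular $\tau$ does not act as $-\mathrm{Id}$ near $p$ (that would force $p$ to be isolated in $C$). Using a $\tau$-equivariant splitting, I can choose formal (or \'etale-local) coordinates $x,y$ at $p$ with $\tau^*(x) = -x$ and $\tau^*(y) = y$; this is the standard linearization of a finite-order automorphism near a fixed point (averaging a coordinate system over the group $\{1,\tau\}$). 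In these coordinates the ring of invariants $\mathcal{O}_{S,p}^\tau$ is generated by $x^2$ and $y$, so $\widehat{\mathcal{O}}_{B,q(p)} \cong \C[[u,y]]$ with $u = x^2$ — a regular local ring of dimension two — proving $B$ is smooth at $q(p)$. The same coordinates show that $q$ is, \'etale-locally near $p$, the map $\C^2 \to \C^2$, $(x,y) \mapsto (x^2,y)$, which is visibly a double cover branched exactly along $\{x = 0\}$, i.e. along $C$.

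Finally I would assemble these local statements: $B$ is smooth everywhere, the branch locus of $q$ is $q(C)$, which maps isomorphically from $C$ (since $q|_C$ is injective — $C$ is fixed pointwise, so no two points of $C$ are identified, and no point outside $C$ maps into $q(C)$), and the ramification divisor in $S$ is $C$ itself. I should also note that $C$, being the fixed locus of the involution $\tau$ on the smooth surface $S$, is a smooth closed subvariety (the fixed locus of a finite-order automorphism of a smooth variety in characteristic zero is smooth), which justifies the parenthetical ``necessarily smooth.''

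The main obstacle, and the only place requiring genuine care, is the equivariant linearization of $\tau$ at fixed points and the verification that one eigenvalue is $+1$ and the other $-1$ (rather than both $-1$): this is what uses the hypothesis that $S^\tau$ is a \emph{curve} rather than a collection of isolated points, and it is what makes the quotient singularity trivial (a smooth point) rather than an $A_1$ surface singularity. Everything downstream — smoothness of $B$, the branched-cover description — is then a direct reading-off of the local normal form $(x,y) \mapsto (x^2, y)$.
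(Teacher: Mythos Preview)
Your argument is correct and follows essentially the same route as the paper: both proofs linearize $\tau$ at a fixed point, use the hypothesis that $S^\tau$ is a curve to deduce that the $\pm1$-eigenspaces on $\mathfrak{m}_p/\mathfrak{m}_p^2$ are each one-dimensional, lift eigenvectors to local coordinates (the paper's $f,h$; your $x,y$), and conclude that the invariant ring has maximal ideal generated by $f^2,h$ and is therefore regular. Your write-up is slightly more detailed in that you spell out the local normal form $(x,y)\mapsto(x^2,y)$ for the branched-cover description and remark on the smoothness of $C$, points the paper leaves implicit.
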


\begin{proof}
Because $S$ is quasi-projective, the geometric quotient $S/ \tau$ exists as a variety (see \cite{S} III. 11). Clearly $S/ \tau $ is smooth at points where $\tau$ acts freely. So let $p \in S^\tau$ be a fixed point.  Since $S$ is smooth, the local ring $\mathcal{O}_p$ is regular of dimension two.  If $\mathfrak{m}_p$ is the maximal ideal, then $ \mathfrak{m}_p/ \mathfrak{m}^2_p$ is a 2-dimensional complex vector space which decomposes into $\pm 1$-eigenspaces of multiplicity one. We can therefore choose generators $f, h \in \mathfrak{m}_p$  so that $ f \circ \tau  = -f$ and $h \circ \tau = h$. The corresponding local ring for $S/ \tau $ is the invariant ring $\mathcal{O}_p^\tau$ which has maximal ideal $ \mathfrak{m}^\tau$ generated by two elements $f^2, h$ and consequently must be regular.
\end{proof}

\begin{prop}
Let $(S,\tau)$ be a branching involution.  Then the fixed point locus $ (S^{[n]})^\tau$ is a smooth variety of dimension $n$.
\end{prop}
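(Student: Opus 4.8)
The plan is to show smoothness of $(S^{[n]})^\tau$ by a local argument: since $(S^{[n]})^\tau$ is the fixed locus of an involution acting on the smooth variety $S^{[n]}$, and we are in characteristic zero, it is automatically smooth. Indeed, a standard fact (due essentially to Cartan, or via linearizing the action at a fixed point) says that the fixed-point scheme of a finite-order automorphism of a smooth variety over a field of characteristic zero is itself smooth, with tangent space at a fixed point $Z$ equal to the $\tau$-invariant subspace of $T_Z S^{[n]}$. So the first step is simply to invoke this principle, with the observation that $\tau$ has order two and that $S^{[n]}$ is smooth of dimension $2n$ by \cite{F}.

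For the dimension count, I would localize at a fixed point $Z \in (S^{[n]})^\tau$ and compute the dimension of the $\tau$-invariant part of $T_Z S^{[n]}$. Write $\pi(Z) = D = \sum_i \lambda_i(p_i + \tau(p_i)) + \sum_j \mu_j q_j$ as in \S\ref{Hilbert schemes and the Hilbert-Chow morphism}; the tangent space $T_Z S^{[n]}$ decomposes, via the local structure of the Hilbert scheme near $Z$, as a direct sum of tangent spaces to the factors $F^{\lambda_i}$, $F^{\lambda_i}$ (the second copy being the $\tau$-image), and $(F^{\mu_j})^\tau$-neighbourhoods. On the pairs of factors swapped by $\tau$, the invariant subspace is the diagonal, contributing exactly $\dim_\C F^{\lambda_i} = 2\lambda_i$; on a $\tau$-fixed point of a factor $F^{\mu_j}$, the surface $S$ near $q_j$ is modeled by $\C^2$ with $\tau(x,y) = (-x,y)$, and I claim the invariant part of the tangent space to the Hilbert scheme there has dimension $\mu_j$. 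Summing gives $\sum_i 2\lambda_i + \sum_j \mu_j = n$, as required. Alternatively, and more cleanly, one can argue that $(S^{[n]})^\tau$ is Lagrangian-dimensional by a symplectic argument in the symplectic case and then note that the dimension is determined étale-locally, so it suffices to verify it in the model case $S = \C^2$, $\tau(x,y)=(-x,y)$, which admits the symplectic form $dx\wedge dy$ with $\tau^*\omega = -\omega$; then Proposition \ref{propforref} gives dimension exactly $n$.

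The main obstacle is the local tangent-space computation at fixed points lying over the branch curve $C$ — i.e., establishing that $\dim(T_Z F^n)^\tau = n$ for $Z$ a $\tau$-fixed subscheme of $\C[x,y]/(x,y)^n$ under $(x,y)\mapsto(-x,y)$. This is exactly the content that \S\ref{The case S= C2} is set up to handle via the Bialynicki--Birula stratification, so in the proof I would either defer this to that section or, to keep the proposition self-contained, give the symplectic reduction argument: the involution on $S^{[n]}$ is anti-symplectic for $\omega_n$ by the same computation as in Proposition \ref{propforref} (which only used anti-symplecticity of $\tau$ on $S$, available étale-locally), so at each fixed point the invariant tangent subspace is a Lagrangian subspace of $(T_Z S^{[n]}, \omega_n)$, hence of dimension exactly $\frac{1}{2}\dim_\C S^{[n]} = n$. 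Since the fixed locus is smooth with tangent spaces of constant dimension $n$, it is smooth of pure dimension $n$, completing the proof.
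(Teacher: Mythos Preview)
Your approach matches the paper's: decompose $T_Z S^{[n]}$ along the support of $Z$, handle $\tau$-swapped pairs by observing that the invariant subspace is the diagonal/graph (hence half-dimensional), and at points of $C$ pass to the local model $(\C^2,\ (x,y)\mapsto(-x,y))$, which is anti-symplectic for $dx\wedge dy$, so that Proposition~\ref{propforref} gives the half-dimensionality there. One notational slip to fix: the tangent-space summands are $T_{Z_i}S^{[\lambda_i]}$ (of dimension $2\lambda_i$), not the punctual fibres $F^{\lambda_i}$ (which have dimension $\lambda_i-1$); your final count $\sum 2\lambda_i+\sum\mu_j=n$ is correct, but the label ``$\dim_\C F^{\lambda_i}=2\lambda_i$'' is not.
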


\begin{proof}
Let $Z \in S^{[n]}$ satisfy $\tau( Z) = Z$.  It suffices to show that the isotropy action of $\tau$ on the tangent space $T_Z S^{[n]}$ invariant subspace of dimension $n$. 

Suppose that $\pi(Z) = \sum \nu_i p_i$  with $p_1,...,p_k$ distinct and let $Z = Z_1 \cup ... \cup Z_k$ be the corresponding decomposition. Then there is a natural isomorphism $ T_ZS^{[n]} \cong \bigoplus_{i=1}^k T_{Z_i} S^{[\nu_i]}$.   If $\tau(p_i) =p_j$ then $\nu_i = \nu_j$ and $\tau_*  ( T_{Z_i}S^{[\nu_i]}) = T_{Z_j}S^{[\nu_j]}$.  If $i \neq j$ then the invariant subspace of $  T_{Z_i} S^{[\nu_i]} \oplus  T_{Z_j} S^{[\nu_j]}$ is the graph of an isomorphism, so must be half dimensional.  

If $ i= j$, then $\tau$ induces an involution of $T_{Z_i} (S^{ [\nu_i]})$. We claim that $ T_{Z_i} (S^{ [\nu_i]})^\tau$ is half dimensional in $T_{Z_i} (S^{ [\nu_i]})$. Choose local coordinate functions $f,g$ on $S$ such that $f(p)=g(p)=0$ and $\tau(f)=-f$ and $\tau(g)=g$. This determines an isomorphism between $T_{Z_i} S^{\nu_i} \cong T_{Z_0}(\C^n)^{\nu_i}$ for some $Z_0 \in \pi^{-1}(\nu_i (0,0))$.  This isomorphism sends $\tau$ to the involution on induced by $\tau_0 (x,y) = (-x,y)$ on $\C^2$, which is anti-symplectic with respect to $dx \wedge dy$.  Applying Proposition \ref{propforref} proves the claim.
\end{proof}

\subsection{Special components of $(S^{[n]})^\tau$}

Since $C = S^\tau \subset S$ is a closed subscheme, we obtain a natural transformation of functors $\underline{Hilb}_C^n \Rightarrow \underline{Hilb}_S^n$ and consequently a morphism $C^{[n]} \rightarrow S^{[n]}$ which maps into $(S^{[n]})^\tau$. 

We also may define a natural transformation $  \underline{Hilb}_B^n \Rightarrow \underline{Hilb}_S^{2n}$   by the rule that $Z \subset  B\times U$ is sent to $\tilde{Z} \subseteq S \times U$ determined by the fibre product $$  \xymatrix{   \tilde{Z} \ar[r] \ar[d]  &  S \times U \ar[d]  \\   Z \ar[r] & B \times U}. $$
Both flatness and finiteness is preserved under base change, and the degree of $\tilde{Z} \rightarrow U$ is 2n because it is the composition of the degree two morphism $\tilde{Z} \rightarrow Z$ with the degree $n$ morphism $Z \rightarrow U$. We obtain a morphism $B^{[n]} \rightarrow S^{[2n]}$. 

Both morphisms $C^{[n]} \rightarrow (S^{[n]})^\tau$ and $B^{[n]} \rightarrow (S^{[2n]})^\tau$ are injective. Clearly we have an isomorphism $ C^{[1]} \cong  (S^{[1]})^\tau$ and it follows from Theorem \ref{mainthm} that we obtain an isomorphism $ C^{[2]} \coprod B^{[1]} \cong  (S^{[2]})^\tau$.  If $C = \emptyset$, then $B^{[n]} \cong S^{[2n]}$ and $S^{[2n-1]} =  \emptyset$ for all $n\geq 1$.  However, if $ C \neq \emptyset$ and $n \geq 3$ then there are other components of $(S^{[n]})^\tau$ not obtained by these constructions.

\section{The case $S= \C^2$}\label{The case S= C2}

In this section, let $S = \C^2 = \mathrm{Spec}( \C[x,y])$. We begin by reviewing the calculation of the Betti numbers of $H^*(S^{[n]})$ following Ellingsrud-Str\o mme \cite{ES} (see also \cite{N}, Chapter 5).

The torus $T^2 = \C^\times \times \C^\times$ acts on $S$ by $(t_1, t_2) \cdot (x,y) = (t_1 x, t_2 y)$. This induces a $T^2$-action on $S^{[n]}$ and on $F^n := \pi^{-1}(n (0,0))$.  The induced $T^2$-action on $\C[x,y]$ has monomials $x^my^n$ as eigenvectors. Consequently, the fixed point set $(S^{[n]})^{T^2} = (F^n)^{T^2}$ is in one-to-one correspondence with the set of monomial ideals in $\C[x,y]$ of colength $n$, which in turn is in one-to-one correspondence with $ \PP_n$ the set of partitions of $n$. To a partitition $\lambda = ( \lambda_1 \geq \lambda_2 \geq ... \geq \lambda_k) \in \PP_n $ with $\lambda_1 +...+\lambda_k = n$, we associate the ideal $$I_\lambda := (y^{\lambda_1}, x y^{\lambda_2},..., x^{\ell(\lambda)}),$$ where $\ell(\lambda) = k$ is the length of $\lambda$. The subschemes $Z_\lambda := Spec( \C[x,y]/I_\lambda)$ are the $T^2$-fixed points in $(S^{[n]})^{T^2}$.

Fix an integer $N \gg n$ and the corresponding one parameter subgroup  $G:= \{(t^N, t) \in T^2 | t \in \C^\times\} \leq T^2$. The restricted action by $G$ on $S^{[n]}$ fixes the same finite set of points as $T^2$. Define for each $\lambda \in \PP_n$ the stable/unstable varieties 
$$  W_\lambda^+  := \{ p \in S^{[n]} | \lim_{t\rightarrow 0} (t^N, t) \cdot p  = Z_\lambda \}  $$ 
$$  W_\lambda^-  := \{ p \in S^{[n]} | \lim_{t\rightarrow \infty} (t^N, t) \cdot p = Z_\lambda \} .$$ 
Each stratum $W^{\pm}_{\lambda}$ is isomorphic to an affine space $ \mathbb{C}^{d^{\pm}_\lambda}$ and $d^{\pm}_\lambda$ is the dimension of the positive/negative weight spaces for the isotropy action of $G$ on $T_{Z_\lambda} S^{[n]}$ (implying that $d_\lambda^+ + d_\lambda^- = 2n$). These determine Bialynicki-Birula stratifications
\begin{eqnarray*}
 S^{[n]} = \bigcup_{\lambda \in \PP_n} W_\lambda^+,  && F^n  = \bigcup_{\lambda \in \PP_n} W_\lambda^-. 
 \end{eqnarray*}
 
Topologically, this means that $F^n$ is a cell complex, with a cell of dimension $2 d^-_\lambda$ for each $\lambda \in \PP_n$. Since all cells have even real dimension, they determine a basis for the homology of $F^n$. Similarly, the one-point compactification of $S^{[n]}$ is a cell complex with a cell of dimension $2d^+_\lambda$ for each $\lambda \in \PP_n$ (plus a point at infinity).  But since $d^+_\lambda + d^-_\Lambda = 2n$, Poincar\'e duality implies the Betti numbers of $S^{[n]}$ and $F^n$ agree.  Indeed $F^n$ is a deformation retract of $S^{[n]}$. 

The representation ring for $T^2$ is isomorphic to the polynomial ring  $\Z [ T_1, T_2]$  where $T_i$ is the one dimensional representation $ (t_1,t_2) \mapsto t_i$.   
The isotropy action of $T^2$ on the tangent space at $Z_{\lambda}$ decomposes as

$$T_{Z_\lambda} S^{[n]} = \sum_{s \in D_\lambda} T^{l(s)+1}_1 T_2^{-a(s)} + T_1^{-l(s)}T_2^{a(s)+1} $$
where the sum is indexed by boxes $s$ in the Young diagram $D_\lambda$ of $\lambda$ and $a(s), l(s)$ are the arm length and leg length respectively. 
 
Restricting to the isotropy action of $G$ on $T_{Z_\lambda} S^{[n]}$, the negative weight spaces at $Z_\lambda$ are precisely those terms of the form $T_1^{-l(s)}T_2^{a(s)+1} $ when $l(s)>0$. Therefore $$d_\lambda^- =  (\lambda_1 -1) + (\lambda_2-1)+... =  n -\ell(\lambda).$$ 

Denoting the Poincar\'e polynomial of a space $X$ by $ P_t ( X) =  \sum_{i \geq 0}  \dim H^i(X,\Q) t^i$, we have  
\begin{equation}\label{snpp}
 P_t( F^n) =  P_t( S^{[n]}) = \sum_{\lambda \in \PP_n} t^{ 2 d_\lambda^-} =  \sum_{\lambda \in \PP_n} t^{ 2 (n - \ell(\lambda))}.
 \end{equation} 
In particular, since $F^n$ is projective we deduce that
\begin{equation}\label{dimfn}
\dim(F^n) = n-1
\end{equation}
and $F^n$ has only one irreducible component of that dimension  (in fact it is irreducible). 

We derive from (\ref{snpp}) the generating function
$$  \sum_{n\geq 0}  u^n P_t( F^{n}) =  \sum_{n\geq 0}  u^n P_t( S^{[n]}) =  \sum_{\lambda \in \PP} t^{2(|\lambda| - \ell(\lambda))}u^{|\lambda|} =  \prod_{k \geq 1}  \frac{1}{1-t^{2k-2}u^k}.$$

Consider now the action of  $\tau$ on $S$ by $\tau(x,y) = (-x, y)$, which is anti-symplectic with respect to the symplectic form $\omega = dx \wedge dy$. Since $\tau$ is induced by $(-1,1) \in T^2$, the $T^2$-action on $S^{[n]}$ restricts to one on $(S^{[n]})^\tau$ and we have equality of fixed point sets 
$$ (( S^{[n]})^\tau)^{T^2} = (S^{[n]})^{T^2}  = \{ Z_\lambda | \lambda \in \PP_n\},$$ 
so by the same argument as before, we acquire Bialynicki-Birula stratifications of both $(F^n)^\tau$ and $(S^{[n]})^\tau$ indexed by $\PP_n$,
\begin{eqnarray}\label{stratoftauinv}
 (S^{[n]})^\tau = \bigcup_{\lambda \in \PP_n} W_\lambda^{+,\tau},  && F^n  = \bigcup_{\lambda \in \PP_n} W_\lambda^{-,\tau}. 
 \end{eqnarray}
Each stratum $W^{\pm,\tau}_{\lambda}$ is isomorphic to an affine space $\mathbb{C}^{d^{\pm,\tau}_\lambda}$ where $d^{\pm,\tau}_\lambda$ is the dimension of the positive/negative weight spaces for the isotropy action of $G$ on $T_{Z_\lambda} (S^{[n]})^\tau$ (which implies that $d_\lambda^{+,\tau} + d_\lambda^{-,\tau} = n$).

The isotropy action of $T^2$ on the tangent space of a fixed point $Z_\lambda \in  (S^{[n]})^\tau$ is 
$$T_{Z_\lambda} (S^{[n]})^\tau= \sum_{\substack{ s \in D \\ l(s) \mathrm{~is~odd}}} T^{l(s)+1}_1 T_2^{-a(s)} +  \sum_{\substack{ s \in D \\ l(s) \mathrm{~is~even}}} T_1^{-l(s)}T_2^{a(s)+1} .$$
The negative definite summands for the $G$-action are those terms $T_1^{-l(s)}T_2^{a(s)+1}$ when $l(s)$ is even and $l(s)>0$. Therefore $d_\lambda^{-,\tau} = \lfloor (\lambda_1-1)/2 \rfloor +   \lfloor (\lambda_2-1)/2 \rfloor+...$ where $\lfloor . \rfloor$ means greatest integer part. Consequently,
\begin{equation}\label{Ppsom}
 P_t( (F^n)^\tau) =  P_t( (S^{[n]})^\tau )  = \sum_{\lambda \in \PP_n} t^{2 d_\lambda^{-,\tau}} = \sum_{\lambda \in \PP_n} t^{ n - \ell(\lambda) -\ell_{even}(\lambda)}. 
 \end{equation} 
 where $\ell_{even}(\lambda)$ is the number of even parts of $\lambda$. In particular, $P_t( (F^n)^\tau)$ has degree equal to $n-1$ if $n$ is odd and $n-2$ if $n$ is even. Since $(F^n)^\tau$ is a projective variety, this determines its dimension
\begin{eqnarray}\label{maxcompfixedpoint}
 \dim  (F^n)^\tau & \leq&   \begin{cases} (n- 1)/2 & \text{ if $n$ is odd} \\  (n- 2)/2 &  \text{ if $n$ is even} \end{cases}
\end{eqnarray} 
and 

\begin{lem}\label{maxcompfixedpoint2}
When $n$ is odd,  the only irreducible component of top dimension in $(F^n)^\tau$, is the closure of the top dimensional cell $\overline{W_{(n)}^{-,\tau}}$. 
\end{lem}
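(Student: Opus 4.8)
The plan is to read everything off the Bialynicki--Birula stratification $(F^n)^\tau = \bigcup_{\lambda \in \PP_n} W^{-,\tau}_\lambda$ from (\ref{stratoftauinv}), in which each stratum $W^{-,\tau}_\lambda \cong \C^{d^{-,\tau}_\lambda}$ is an affine cell and $2 d^{-,\tau}_\lambda = n - \ell(\lambda) - \ell_{even}(\lambda)$ by (\ref{Ppsom}). Because a variety covered by finitely many locally closed subsets has dimension equal to the largest of their dimensions, $\dim (F^n)^\tau = \max_{\lambda \in \PP_n} d^{-,\tau}_\lambda$. The lemma therefore reduces to two points: (i) for $n$ odd, $\lambda = (n)$ is the unique partition realizing this maximum, which equals $(n-1)/2$; and (ii) every irreducible component of $(F^n)^\tau$ of dimension $(n-1)/2$ is the closure $\overline{W^{-,\tau}_{(n)}}$.

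For (i): since $n$ is odd, $d^{-,\tau}_{(n)} = \lfloor (n-1)/2\rfloor = (n-1)/2$; and for any $\lambda \in \PP_n$ with $\lambda \neq (n)$ we have $\ell(\lambda) \geq 2$, so (\ref{Ppsom}) gives $d^{-,\tau}_\lambda = \tfrac12(n - \ell(\lambda) - \ell_{even}(\lambda)) \leq \tfrac12(n-2) < \tfrac12(n-1)$. Hence $(n-1)/2$ is the maximal cell dimension and $W^{-,\tau}_{(n)} \cong \C^{(n-1)/2}$ is the only cell attaining it (this also recovers (\ref{maxcompfixedpoint}) for odd $n$).

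For (ii): the closed irreducible subset $\overline{W^{-,\tau}_{(n)}}$ has dimension $(n-1)/2 = \dim (F^n)^\tau$, and a closed irreducible subset of maximal dimension is an irreducible component. Conversely, let $V \subseteq (F^n)^\tau$ be an irreducible component with $\dim V = (n-1)/2$. Then $V = \bigcup_{\lambda \in \PP_n}(V \cap W^{-,\tau}_\lambda)$ is a finite union of locally closed subsets, so by irreducibility some $V \cap W^{-,\tau}_{\lambda_0}$ is dense in $V$; being locally closed and dense in an irreducible space it is open in $V$, so $\dim(V \cap W^{-,\tau}_{\lambda_0}) = (n-1)/2$. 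Since $V \cap W^{-,\tau}_{\lambda_0} \subseteq \C^{d^{-,\tau}_{\lambda_0}}$, this forces $d^{-,\tau}_{\lambda_0} = (n-1)/2$, whence $\lambda_0 = (n)$ by (i). Thus $V = \overline{V \cap W^{-,\tau}_{(n)}} \subseteq \overline{W^{-,\tau}_{(n)}}$, and as both are irreducible of the same dimension, $V = \overline{W^{-,\tau}_{(n)}}$.

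I do not expect a genuine obstacle here. The numerical content is already contained in (\ref{Ppsom}), and (ii) is the standard principle that an irreducible component of a variety with a finite locally closed affine-cell stratification is the closure of the top-dimensional cell it meets. The only points needing a little care are the trivial arithmetic in (i) (where the parity of $n$ is used precisely to get $\lfloor (n-1)/2\rfloor = (n-1)/2$) and, in (ii), genuinely ruling out a top-dimensional component concentrated on lower-dimensional cells via the dimension count rather than tacitly assuming it away.
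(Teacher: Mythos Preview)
Your proof is correct and follows the same approach as the paper, which does not give a separate proof but treats the lemma as an immediate consequence of the Poincar\'e polynomial computation (\ref{Ppsom}) and the cell stratification (\ref{stratoftauinv}). You have simply made explicit the two ingredients the paper leaves implicit: the arithmetic showing $(n)$ is the unique partition with $2d^{-,\tau}_\lambda = n-1$ when $n$ is odd, and the standard argument that in a finite affine-cell stratification every top-dimensional irreducible component is the closure of a top-dimensional cell.
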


From (\ref{Ppsom}) we also derive the generating function
\begin{eqnarray*}
  \sum_{n\geq 0}  u^n P_t( (F^n)^\tau )  = \sum_{n\geq 0}  u^n P_t( (S^{[n]})^\tau ) &=& \prod_{k \mathrm{~odd}}  \frac{1}{1-t^{k-1}u^k}  \prod_{k \mathrm{~even}}  \frac{1}{1-t^{k-2}u^k}  \\
 &=& \prod_{m\geq 0} \frac{1}{ (1-t^{2m}u^{2m+1}) (1-t^{2m}u^{2m+2})} ,
 \end{eqnarray*}

\section{Mixed Hodge structures}\label{Mixed Hodge structures}
In this section we apply the strategy of G\"ottsche-Soergel to calculate the mixed Hodge polynomial of $(S^{[n]})^\tau$.

Given a variety $X$, denote by $MHM(X)$ its category of mixed Hodge modules, by $D(X) := D^b(MHM(X))$ the bounded derived category, and by $\mathcal{H}^i: D(X) \rightarrow MHM(X)$ the cohomology functors. Objects $M$ in $D(X)$ admit degree shifts $M \mapsto M[d]$ and Tate twists $M \mapsto M(d)$ for $d \in \Z$, with $MHM(X)$ being stable under Tate twists. A morphism $f: Z\rightarrow Y$ determines a pair of adjoint functors $(f_*,f^*)$, relating $D(Z)$ and $D(Y)$. If $pt$ is point, then $MHM(pt)$ is the category of graded, polarizable mixed Hodge structures over $\Q$. Denote by $\Q_{pt}$ the one dimensional mixed Hodge structure of type $(0,0)$ and, if $p: X \rightarrow pt$, put $ \Q_X := p^* \Q_{pt}$.  Then $\mathcal{H}^i p_*(\Q_X) = H^i(X, \Q)$ equipped with its standard mixed Hodge structure.

For $X$ irreducible, denote by $IC(X) \in MHM(X)$ its intersection cohomology sheaf. It is characterized by the property that $IC(X)$ is simple in $MHM(X)$ and that if $i: U \hookrightarrow X$ is a dense, open, smooth subvariety, then $i^*IC(X) = \Q_U [dim U]$. For each irreducible subvariety $i: Y \hookrightarrow X$ we abuse notation and denote by $IC(Y) = i_*IC(Y) \in MHM(X)$.

Let $\phi: Z \rightarrow Y$ be a surjective proper morphism between quasi-projective varieties. Suppose that $ Z = \coprod Z_i$ is a disconnected union of finitely many irreducible varieties $Z_i$ of equal dimension. Suppose $Y$ admits a stratification into a finite number of irreducible nonsingular subvarieties $Y = \cup_y O_y$, where $y$ denotes a distinguished point in $O_y$. For each stratum $O_y$, assume that the restriction of $\phi$ to $\pi^{-1}(O_y)$ is a topological fibre bundle with base $O_y$ and fibre $\phi^{-1}(y)$. The morphism $\phi$ is called \emph{semismall} if it satisfies $ 2 \dim \phi^{-1}(y) \leq \codim O_y$ for every stratum $O_y$. We say $O_y$ is \emph{relevant} for $\phi$ if equality holds. 

The following is due to Borho and MacPherson \cite{BM} for perverse sheaves (see also \cite{N} Thm 6.8), and a version for mixed Hodge modules is proven by G\"ottsche-Soergel (\cite{GS} Thm 5). \footnote{G\"ottsche-Soergel prove their result in the special case of the the Hilbert-Chow morphism $\pi: S^{[n]} \rightarrow S^{(n)}$, but their argument applies almost unchanged in this more general setting.}

\begin{thm}\label{BorhoMac}
Let $\phi: Z \rightarrow Y$ be a semismall projective morphism with respect to the stratification $Y = \cup_y O_y$, and let $d_y := \dim \phi^{-1}(y)$.  Suppose that $\dim H^{2 d_y}( \phi^{-1}(y)) = 1$ for all relevant strata $O_y$. Then there is a canonical isomorphism in $MHM(Y)$ 
$$ \phi_* IC(Z) \cong  \bigoplus_{y}  IC( \overline{O}_y) ( -d_y )$$
where the summation runs through all relevant strata $O_y$ and $\overline{O}_y$ is the closure of $O_y$.
\end{thm}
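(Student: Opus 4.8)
The plan is to reduce the statement to the classical Borho–MacPherson decomposition for semismall maps, then track the Hodge-theoretic data. First I would recall the Decomposition Theorem for mixed Hodge modules: since $\phi$ is proper, $\phi_*$ preserves $D(Y)$, and because $\phi$ is semismall the complex $\phi_* IC(Z)$ is concentrated in perverse degree zero, i.e.\ it is a pure Hodge module on $Y$ (after the shifts built into our normalization of $IC$). Purity follows because $IC(Z)$ is pure of weight $\dim Z$ and $\phi$ is proper, so $\phi_* IC(Z)$ is pure of the same weight; a pure Hodge module is semisimple, hence decomposes as a direct sum of Tate twists of $IC$-sheaves of irreducible subvarieties of $Y$. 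Writing $Z = \coprod Z_i$ with the $Z_i$ irreducible of common dimension $d$, we have $IC(Z) = \bigoplus_i IC(Z_i)$, so it suffices to treat each $Z_i$; the hypothesis that the fibre bundle structure holds over each stratum $O_y$ lets us compute stalks.

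Next I would pin down exactly which $IC(\overline{O}_y)$ appear and with what twist, by a stalk computation over a general point $y \in O_y$. For a semismall map, the only summands $IC(\overline{O}_y)(-e)$ that can occur are those for which $O_y$ is relevant (this is the content of the support and cosupport inequalities: a summand $IC(\overline{O}_y)(-e)$ contributes in the top allowed cohomological degree over $O_y$ precisely when $2d_y = \codim O_y$, and the Tate twist is forced to be $e = d_y$ by weight reasons, since the stalk of $\phi_* IC(Z)$ at $y$ in the relevant degree is $H^{2d_y}(\phi^{-1}(y))$ which has weight $2d_y$). The multiplicity of $IC(\overline{O}_y)(-d_y)$ in the decomposition equals $\dim H^{2d_y}(\phi^{-1}(y))$ (more precisely it is the dimension of the top nonzero stalk cohomology, identified via the local triviality assumption with the top cohomology of the fibre, with its pure Hodge structure); our hypothesis that this dimension is $1$ for all relevant $O_y$ forces each such summand to appear with multiplicity exactly one, and moreover forces the one-dimensional Hodge structure $H^{2d_y}(\phi^{-1}(y))$ to be $\Q(-d_y)$, which is what makes the identification of the Tate twist clean and canonical rather than merely up to isomorphism.

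Finally I would assemble the canonical isomorphism. The decomposition of a pure Hodge module into isotypic pieces is canonical, and within each isotypic piece the multiplicity-one hypothesis removes any ambiguity, so the resulting isomorphism $\phi_* IC(Z) \cong \bigoplus_y IC(\overline{O}_y)(-d_y)$ is canonical. Concretely, the map is adjoint to the natural morphism obtained by restricting to the open dense stratum and using that over a relevant stratum the derived pushforward, restricted and placed in top degree, is the constant sheaf twisted by $\Q(-d_y)$; one checks the induced morphism is an isomorphism stratum by stratum using the already-established abstract decomposition to control the remaining summands. The main obstacle I expect is not the abstract existence of a decomposition — that is immediate from purity and semisimplicity of pure Hodge modules — but rather the bookkeeping that exactly the relevant strata contribute, each with the correct single Tate twist $(-d_y)$ and multiplicity one; this is where the semismallness inequality and the one-dimensionality hypothesis must be used together, and where one must be careful that the local system on $O_y$ that would in general appear (the monodromy action on $H^{2d_y}(\phi^{-1}(y))$) is trivial, which again follows because a rank-one pure local system underlying a Tate-type Hodge module on a smooth connected variety is constant. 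Since G\"ottsche--Soergel carry out precisely this argument for $\pi: S^{[n]} \to S^{(n)}$ and, as noted in the footnote, their proof uses nothing special about that map beyond the hypotheses we have assumed, I would either cite \cite{GS} Thm 5 directly or transcribe their argument verbatim with $\phi$ in place of $\pi$.
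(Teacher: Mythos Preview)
Your proposal is correct and aligns with the paper's treatment: the paper does not supply its own proof of this theorem but simply cites Borho--MacPherson \cite{BM} for the perverse-sheaf version and G\"ottsche--Soergel \cite{GS} Thm~5 for the mixed Hodge module version, with a footnote observing that the latter's argument, though stated for the Hilbert--Chow morphism, goes through verbatim under the stated hypotheses. Your sketch---purity of $\phi_* IC(Z)$, semisimplicity of pure Hodge modules, stalk computation over the strata to identify the summands and Tate twists, and the multiplicity-one hypothesis to make the decomposition canonical---is a faithful outline of precisely that cited argument, and your concluding suggestion to cite \cite{GS} directly is exactly what the paper does.
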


We will apply this theorem to the restricted Hilbert-Chow morphism $ \pi_\tau:   (S^{[n]})^\tau \rightarrow (S^{(n)})^\tau$. Let $C = \cup_{k=1}^c C_k$ be the decomposition into connected components of $C = S^\tau$.  Choose $\lambda \in \PP$ and $\mu^k \in \PP$ for $k=1,...,c$  such that $2 | \lambda | + \sum_k |\mu^k| = n$. Denote the ordered tuple $\vec{\mu} = (\mu^1,...,\mu^c)$,  and write $| \vec{\mu}| =  |\mu^1|+...+|\mu^c|$, and $\ell(\vec{\mu}) = \ell(\mu^1)+...+\ell(\mu^k)$.  Define the locally closed subvariety $(S^{(n)})^\tau_{\lambda, \vec{\mu}}   \subseteq  (S^{(n)})^\tau$ consisting of divisors of the form 
 $$ D = \sum_{i}  \lambda_i  ( p_i   + \tau (p_i))  +  \sum_{k} \sum_{j}  \mu_j^k  q_j^k$$
 where $p_1, \tau(p_1),..., $  are distinct points in $S \setminus C$  and $q_1^k,q_2^k,...$ are distinct points in $C_k$.

\begin{prop}
The restricted Hilbert-Chow morphism $ \pi_\tau:   (S^{[n]})^\tau \rightarrow (S^{(n)})^\tau$ is semismall with respect to the stratification $$(S^{(n)})^\tau = \bigcup_{\substack{ \lambda \in \PP, \vec{\mu} \in \PP^c \\ 2|\lambda|+|\vec{\mu}| =n}} (S^{(n)})^\tau_{\lambda, \vec{\mu}} .$$ The relevant strata are those $(S^{(n)})^\tau_{\lambda, \vec{\mu}}$ for which the $\mu^k$ have only odd parts and the fibres above the relevant strata have one-dimensional cohomology in top degree. We deduce a canonical isomorphism of mixed Hodge modules
$$ \pi_* IC((S^{[n]})^\tau) =  \bigoplus_{\substack{ \lambda \in \PP, \vec{\mu} \in \PP^c_{odd} \\ 2|\lambda|+|\vec{\mu}| =n}} IC( \overline{(S^{(n)})^\tau}_{\lambda, \vec{\mu}}) \left(  \frac{2 \ell(\lambda)+\ell(\vec{\mu})-n}{2} \right)$$
summed over all partitions $\lambda$ and $c$-tuples of partitions $\vec{\mu}$ such that $2|\lambda|+|\vec{\mu}| = n$ and $\vec{\mu}$ has only odd parts.
\end{prop}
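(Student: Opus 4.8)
The plan is to apply Theorem~\ref{BorhoMac} to $\phi=\pi_\tau$, so I would begin by checking its hypotheses. Since $\pi\colon S^{[n]}\to S^{(n)}$ is projective and $\tau$-equivariant, the restriction $\pi_\tau$ to the closed $\tau$-fixed loci is projective; it is surjective because, by the fibre description in \S\ref{Hilbert schemes and the Hilbert-Chow morphism}, each fibre is a product of factors $F^{m}$ and $(F^{m})^\tau$, all of which are nonempty (every monomial scheme $Z_\lambda$ is $\tau$-fixed). By the previous proposition, $(S^{[n]})^\tau$ is smooth of pure dimension $n$, hence a disjoint union of finitely many smooth irreducible varieties of dimension $n$ with $IC((S^{[n]})^\tau)=\Q_{(S^{[n]})^\tau}[n]$. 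Writing a $\tau$-fixed degree-$n$ effective $0$-cycle uniquely as $\sum_i\lambda_i(p_i+\tau(p_i))+\sum_{k,j}\mu_j^k q_j^k$ with $p_i\in S\setminus C$ and $q_j^k\in C_k$ exhibits $(S^{(n)})^\tau$ as the disjoint union of the locally closed, smooth, irreducible strata $(S^{(n)})^\tau_{\lambda,\vec\mu}$. Over each stratum the combinatorial type is constant, and I would invoke the local product structure of Hilbert schemes along disjoint clusters (\cite{F}, in its relative form) to conclude that $\pi_\tau$ is a locally trivial topological fibre bundle over $(S^{(n)})^\tau_{\lambda,\vec\mu}$ with fibre $\prod_i F^{\lambda_i}\times\prod_{k,j}(F^{\mu_j^k})^\tau$; this is essentially G\"ottsche--Soergel's argument for $\pi$ itself, and I expect it to be the main technical obstacle.

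Granting this, I would run the dimension count. A divisor in $(S^{(n)})^\tau_{\lambda,\vec\mu}$ is determined by $\ell(\lambda)$ unordered points of $S\setminus C$ (the $p_i$, grouped by the value of $\lambda_i$) together with, for each $k$, $\ell(\mu^k)$ unordered points of the curve $C_k$, so
$$\dim(S^{(n)})^\tau_{\lambda,\vec\mu}=2\ell(\lambda)+\ell(\vec\mu),\qquad \codim(S^{(n)})^\tau_{\lambda,\vec\mu}=n-2\ell(\lambda)-\ell(\vec\mu),$$
using $n=2|\lambda|+|\vec\mu|$. For the fibre over $D$ in this stratum, $(\ref{dimfn})$ gives $\dim F^{m}=m-1$ and the weight computation in \S\ref{The case S= C2} gives $\dim(F^{m})^\tau=\lfloor(m-1)/2\rfloor$, hence
$$d_D:=\dim\pi_\tau^{-1}(D)=\bigl(|\lambda|-\ell(\lambda)\bigr)+\tfrac12\bigl(|\vec\mu|-\ell(\vec\mu)-\ell_{even}(\vec\mu)\bigr),$$
where $\ell_{even}(\vec\mu)$ counts the even parts among all the $\mu^k$. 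Subtracting, $2d_D-\codim(S^{(n)})^\tau_{\lambda,\vec\mu}=-\ell_{even}(\vec\mu)\le 0$, which is precisely semismallness; and equality — i.e.\ the stratum is relevant — holds exactly when $\ell_{even}(\vec\mu)=0$, that is, when every part of every $\mu^k$ is odd.

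Finally I would verify the top-degree condition of Theorem~\ref{BorhoMac} over a relevant stratum. There the fibre is $\prod_i F^{\lambda_i}\times\prod_{k,j}(F^{\mu_j^k})^\tau$ with every $\mu_j^k$ odd. Each $F^{\lambda_i}$ is irreducible (\S\ref{The case S= C2}), so $H^{2(\lambda_i-1)}(F^{\lambda_i};\Q)$ is one-dimensional; for $m$ odd, Lemma~\ref{maxcompfixedpoint2} says $(F^{m})^\tau$ has a unique irreducible component of its top dimension $\lfloor(m-1)/2\rfloor$, so $H^{2\lfloor(m-1)/2\rfloor}((F^{m})^\tau;\Q)$ is one-dimensional; by the K\"unneth formula the top-degree cohomology of the product is the tensor product of these lines, hence one-dimensional. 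Theorem~\ref{BorhoMac} then yields
$$\pi_{\tau *}IC((S^{[n]})^\tau)\cong\bigoplus_{\substack{\lambda\in\PP,\ \vec\mu\in\PP^c_{odd}\\ 2|\lambda|+|\vec\mu|=n}}IC\bigl(\overline{(S^{(n)})^\tau}_{\lambda,\vec\mu}\bigr)(-d_D),$$
and since $\ell_{even}(\vec\mu)=0$ on a relevant stratum and $n=2|\lambda|+|\vec\mu|$,
$$d_D=|\lambda|-\ell(\lambda)+\tfrac12\bigl(|\vec\mu|-\ell(\vec\mu)\bigr)=\tfrac12\bigl(n-2\ell(\lambda)-\ell(\vec\mu)\bigr),$$
which is an integer because $n\equiv|\vec\mu|\equiv\ell(\vec\mu)\pmod 2$. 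Hence $-d_D=\tfrac{2\ell(\lambda)+\ell(\vec\mu)-n}{2}$, giving the asserted twist. To summarize, everything beyond the fibre-bundle triviality over the strata — which is the one genuine obstacle — is the bookkeeping above together with the irreducibility of $F^{m}$ and Lemma~\ref{maxcompfixedpoint2}.
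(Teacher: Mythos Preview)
Your proposal is correct and follows essentially the same route as the paper: identify the fibre over a point of $(S^{(n)})^\tau_{\lambda,\vec\mu}$ as $\prod_i F^{\lambda_i}\times\prod_{j,k}(F^{\mu_j^k})^\tau$, compute its dimension from (\ref{dimfn}) and (\ref{maxcompfixedpoint}), compare with the codimension $n-2\ell(\lambda)-\ell(\vec\mu)$, and use irreducibility of $F^m$ together with Lemma~\ref{maxcompfixedpoint2} for the one-dimensionality of top cohomology. Your bookkeeping via $\ell_{even}(\vec\mu)$ makes the inequality $2d_D-\codim=-\ell_{even}(\vec\mu)\le 0$ slightly more transparent than the paper's phrasing, and you are more explicit about the auxiliary hypotheses of Theorem~\ref{BorhoMac} (projectivity, surjectivity, local topological triviality over strata), which the paper leaves implicit; but there is no substantive difference in method.
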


\begin{proof}
The strata $(S^{(n)})^\tau_{\lambda, \vec{\mu}} $ are clearly non-singular, connected and have codimension $ n - 2 \ell(\lambda) - \ell(\vec{\mu})$. If $D \in (S^{(n)})^\tau_{\lambda, \vec{\mu}}$, then the preimage is homeomorphic to 

\begin{eqnarray*}
\pi^{-1}_\tau(D) & \cong &  \prod_{i} \pi_{\tau}^{-1} (\lambda_i (p_i + \tau p_i)) \times \prod_{j,k} \pi_{\tau}^{-1} (\mu_j^k q_j^k) \\
& \cong &  \prod_{i} F^{\lambda_i} \times \prod_{j,k} \left(F^{\mu_j^k}\right)^{\tau}.
\end{eqnarray*}

We know (\ref{dimfn}) and (\ref{maxcompfixedpoint}) $$  \dim(F^{\lambda}) = \lambda - 1, \qquad   \dim((F^{\mu})^\tau) = \begin{cases} (\mu - 1)/2 & \text{ if $\mu$ is odd} \\  (\mu- 2)/2 &  \text{ if $\mu$ is even,}  \end{cases} $$
so
 \begin{eqnarray*}
  \dim \pi_\tau^{-1}(D)  & \leq &  \left( \sum_{i} (\lambda_i -1)  + \frac{1}{2} \sum_{j,k} \mu_j^k -1 \right) \\
  &= & n/2 - \ell(\lambda) - \ell(\vec{\mu})/2
 \end{eqnarray*}
with equality if and only if all of the $\mu_j^k$ are odd. Finally, if $D$ lies in a relevant stratum,

\begin{eqnarray*}
H^{n - 2 \ell(\lambda) + \ell(\vec{\mu}) }( \pi^{-1}(D)) & \cong&  \bigotimes_{i} H^{2(\lambda_i-1)}(F^{\lambda_i}) \otimes  \bigotimes_{j,k} H^{(\mu_j^k-1)}(\left(F^{\mu_j^k}\right)^{\tau}) \\
 & \cong&  \Q.
\end{eqnarray*}
\end{proof}

Next, we compute the intersection cohomology for the closures of relevant strata.   We borrow the following from (\cite{GS}  Lemma 1 and Proposition 3).

\begin{lem}\label{passingtoprods}
 Let $\kappa: X \rightarrow Y$ be a finite birational morphism between irreducible varieties. Then $\kappa_*IC(X) = IC(Y)$ in $D(Y)$.
\end{lem}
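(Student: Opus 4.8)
The plan is to reduce to the defining property of the intermediate extension, exploiting that a finite morphism is both proper and affine. In particular $\kappa_* = \kappa_!$, and $\kappa_*$ is t-exact for the perverse t-structure underlying $MHM$, so it restricts to an exact functor $MHM(X) \to MHM(Y)$; in particular $\kappa_* IC(X)$ already lies in the heart $MHM(Y)$, and it suffices to identify it with $IC(Y)$ there.

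First I would fix a dense open $U \subseteq Y$ over which $\kappa$ is an isomorphism — such $U$ exists because $\kappa$ is finite and birational — and choose a smooth affine dense open $V \subseteq U \cap Y_{\mathrm{sm}}$ of $Y$. Since $\kappa$ is finite (hence affine), $V' := \kappa^{-1}(V)$ is an affine dense open of $X$, isomorphic to $V$ via $\kappa' := \kappa|_{V'}$, and hence smooth. Write $j: V \hookrightarrow Y$ and $j': V' \hookrightarrow X$ for the open immersions, so $\kappa \circ j' = j \circ \kappa'$. By locality of intersection cohomology, $IC(X)|_{V'} = IC(V') = \Q_{V'}[d]$ with $d = \dim X = \dim Y$, and $\kappa'$ identifies this with $IC(V) = \Q_V[d]$. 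Since the $IC$ sheaf of a variety is the intermediate extension of its restriction to any smooth dense open, $IC(X) = j'_{!*} IC(V')$ and $IC(Y) = j_{!*} IC(V)$, where $j_{!*}(-) = \mathrm{Im}\big({}^pH^0(j_!(-)) \to {}^pH^0(j_*(-))\big)$ in $MHM(Y)$ and similarly for $j'$ (here $j, j'$ being affine makes $j_!, j_*$, $j'_!, j'_*$ t-exact, so these formulas are the usual ones).

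Then I would apply the exact functor $\kappa_*$ to this image. Because $\kappa$ and $\kappa'$ are proper, $\kappa_* j'_! = \kappa_! j'_! = (\kappa j')_! = (j\kappa')_! = j_! \kappa'_! = j_! \kappa'_*$ as functors of derived categories, and likewise $\kappa_* j'_* = j_* \kappa'_*$. Using t-exactness of $\kappa_*$ to commute it past ${}^pH^0$, together with $\kappa'_* IC(V') = IC(V)$ (as $\kappa'$ is an isomorphism), we obtain $\kappa_*\big({}^pH^0 j'_! IC(V')\big) = {}^pH^0 j_! IC(V)$ and $\kappa_*\big({}^pH^0 j'_* IC(V')\big) = {}^pH^0 j_* IC(V)$, and $\kappa_*$ carries the canonical morphism between the left-hand objects to the canonical morphism between the right-hand ones. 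Since $\kappa_*$ is exact it commutes with taking images, whence $\kappa_* IC(X) = \kappa_* j'_{!*} IC(V') = j_{!*} IC(V) = IC(Y)$, an isomorphism in $MHM(Y)$ and a fortiori in $D(Y)$.

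The step requiring the most care — and the main obstacle — is the final compatibility assertion: that under the functoriality isomorphisms $\kappa_* j'_! \cong j_! \kappa'_*$ and $\kappa_* j'_* \cong j_* \kappa'_*$, the functor $\kappa_*$ sends the canonical morphism $j'_! IC(V') \to j'_* IC(V')$ to the canonical morphism $j_! IC(V) \to j_* IC(V)$. This is a formal diagram chase with the units and counits of the relevant adjunctions, using crucially that for a proper morphism $f$ the comparison $f_! \to f_*$ is an isomorphism (so that $\eta_\kappa$ and $\eta_{\kappa'}$ are identities); it is standard but deserves to be written out. Alternatively, one may run the whole argument at the level of the underlying perverse sheaves — where this compatibility is classical — and then note that $\kappa_*IC(X)$ is pure of weight $d$ and restricts to $IC(V)$ on the dense open $V$, so that as a simple pure Hodge module it must be $IC(Y)$ with no extra Tate twist, transporting the identification automatically to $MHM(Y)$.
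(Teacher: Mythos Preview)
Your argument is correct: finiteness of $\kappa$ gives $\kappa_*=\kappa_!$ and t-exactness for the perverse t-structure (hence exactness on $MHM$ by Saito's theory), and then the commutation of the exact functor $\kappa_*$ with the image defining $j_{!*}$, together with the base-change identities $\kappa_* j'_! \cong j_! \kappa'_*$ and $\kappa_* j'_* \cong j_* \kappa'_*$, yields $\kappa_* IC(X)=IC(Y)$. The compatibility of the canonical map $j'_!\to j'_*$ with $j_!\to j_*$ under these identifications is, as you say, a routine adjunction check; your alternative via purity and simplicity is also a clean way to finish.

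There is nothing in the paper to compare against: the author does not prove this lemma but simply quotes it from G\"ottsche--Soergel \cite{GS} (their Lemma~1). Your write-up is essentially the standard argument one would give, and in fact slightly more detailed than what appears in \cite{GS}, where the proof is the one-line observation that a finite map has t-exact pushforward and that $\kappa_*IC(X)$ restricts to the constant sheaf on a smooth dense open, forcing it to be the intermediate extension. So you have recovered (and fleshed out) the cited argument.
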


\begin{lem}\label{rathomman}  Let the finite group $G$ act on the smooth irreducible variety $X$. Then
$$IC(X/G) = \Q_{X/G}[\dim X]$$
in $D(X/G)$.
\end{lem}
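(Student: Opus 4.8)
The plan is to reduce the statement about the intersection cohomology module $IC(X/G)$ to the known characterization of $IC$ recalled earlier in the section: namely that $IC(X/G)$ is the unique simple object in $MHM(X/G)$ whose restriction to a dense open smooth subvariety $j: U \hookrightarrow X/G$ agrees with $\Q_U[\dim(X/G)] = \Q_U[\dim X]$. So it suffices to verify two things about the candidate object $\Q_{X/G}[\dim X]$: first, that it is simple in $MHM(X/G)$ (equivalently, a pure Hodge module with no proper sub- or quotient-modules), and second, that it restricts correctly over a suitable open set.

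First I would choose the open locus. Since $G$ is finite and $X$ is smooth, the action has finite stabilizers, and the locus $X^{free} \subseteq X$ where $G$ acts freely is open, $G$-invariant, and dense (its complement is a proper closed subvariety, being contained in the union of the fixed loci of the nontrivial elements of $G$). Its image $U := X^{free}/G$ is then a dense open subvariety of $X/G$, and the quotient map $X^{free} \to U$ is an étale $G$-cover, so $U$ is smooth of dimension $\dim X$. Over $U$ the sheaf $\Q_{X/G}$ restricts to $\Q_U$ (the quotient of a smooth variety by a free finite action is smooth and the constant sheaf pulls back/pushes forward compatibly), so $(\Q_{X/G}[\dim X])|_U = \Q_U[\dim X]$, which is exactly $IC(X/G)|_U$. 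This handles the second requirement.

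Next I would establish purity and simplicity of $\Q_{X/G}[\dim X]$. The key input is that $X/G$ has quotient (hence finite-quotient, hence rational) singularities, so the constant sheaf behaves like on a smooth variety from the point of view of Hodge theory. Concretely, let $p: X \to X/G$ be the (finite, surjective) quotient map. Then $p_* \Q_X$ carries a $G$-action and decomposes in $MHM(X/G)$ into isotypic pieces; the $G$-invariant part is $\Q_{X/G}$, so $\Q_{X/G}[\dim X]$ is a direct summand of $p_*(\Q_X[\dim X]) = p_* IC(X)$. Since $p$ is finite, $p_*$ is exact for the perverse $t$-structure and preserves purity and semisimplicity (finite morphisms are in particular proper and small, or one may invoke the decomposition theorem directly), so $p_* IC(X)$ is a pure Hodge module of weight $\dim X$ and is semisimple; hence every direct summand, in particular $\Q_{X/G}[\dim X]$, is pure and semisimple. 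To see it is actually \emph{simple} rather than a nontrivial sum, note that its restriction to the dense open $U$ is $\Q_U[\dim X]$, which is simple in $MHM(U)$, and a semisimple object of $MHM(X/G)$ whose restriction to a dense open is simple can have at most one summand with full support plus possibly summands supported on the complement; purity together with the fact that $\Q_{X/G}$ is the constant sheaf (so has no sub- or quotient-sheaf supported on a proper closed subset — its stalks are all $\Q$ and the variety $X/G$ is irreducible) rules out the extra summands. Therefore $\Q_{X/G}[\dim X]$ is simple with the correct generic restriction, so by the characterization it equals $IC(X/G)$.

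The main obstacle, and the step that needs the most care, is the simplicity/purity argument — specifically justifying that $\Q_{X/G}[\dim X]$ has no perverse sub- or quotient-module supported on the singular locus. The clean way around this is exactly the averaging trick: realize $\Q_{X/G}$ as the $G$-invariant summand of $p_*\Q_X$, so that $\Q_{X/G}[\dim X]$ inherits purity and semisimplicity from $p_* IC(X)$ for free via the decomposition theorem for the finite (proper) morphism $p$; then only the mild point that an irreducible variety's constant-sheaf shift has no summand with smaller support remains, and that follows by comparing supports (any such summand would force $\Q_{X/G}$ to have a nonzero subquotient sheaf concentrated on a proper closed set, impossible for the constant sheaf on an irreducible variety). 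One should also remark that the same argument shows the isomorphism can be taken in $MHM$ and not merely in $D(X/G)$, but stating it in $D(X/G)$ as in the lemma suffices for the applications.
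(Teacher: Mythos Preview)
The paper does not actually prove this lemma; it simply cites G\"ottsche--Soergel \cite{GS} (their Lemma~1 and Proposition~3). Your argument is correct and is essentially the standard one that appears there: realise $\Q_{X/G}[\dim X]$ as the $G$-invariant summand of $p_*IC(X)$ for the finite quotient map $p$, use exactness of $p_*$ for the perverse $t$-structure together with the decomposition theorem to get purity and semisimplicity, and then identify the result with $IC(X/G)$ by its generic restriction.

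Two small points worth tightening. First, before asserting that the free locus $X^{free}$ is dense you should reduce to the case where $G$ acts faithfully: if a nontrivial element acts trivially the free locus is empty, but then $X/G = X/(G/K)$ with $K$ the kernel of the action and the statement is unchanged. Second, your final step ruling out summands with smaller support is cleanest phrased as an indecomposability statement: any direct summand of $\Q_{X/G}[\dim X]$ in $D(X/G)$ has cohomology sheaves only in degree $-\dim X$, summing to $\Q_{X/G}$; since $X/G$ is irreducible (hence connected) one has $\mathrm{End}(\Q_{X/G})=\Q$, so $\Q_{X/G}$ is indecomposable as a sheaf, forcing one summand to vanish. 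Indecomposable plus semisimple gives simple, and together with the correct restriction to $U$ this pins down $IC(X/G)$.
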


For a variety $M$ and partition $\lambda = (1^{\alpha_1} 2^{\alpha_2}...)$ define $$M^{(\lambda)} := M^{(\alpha_1)} \times M^{(\alpha_2)} \times ....$$ 
Note that $M^{(\lambda)}$ is the quotient of $M^{\ell(\lambda)}$ by the finite permutation group $\mathcal{S}_{\alpha_1} \times \mathcal{S}_{\alpha_2} \times ...$. For a tuple of partitions $\vec{\mu}= (\mu^1,...,\mu^c)$ we write $ C^{(\vec{\mu})} = C_1^{(\mu^1)}\times C_2^{(\mu^2)}\times ...\times C_c^{(\mu^c)}$. 
 
With $ \lambda, \vec{\mu}$ as above,  consider the morphism $\psi:  B^{(\lambda)}\times C^{(\vec{\mu})}   \rightarrow S^{(n)}$  defined by $ \psi = \psi_B + \sum_{k=1}^c\psi_k$  where 
$$\psi_B: B^{(\lambda)} \rightarrow S^{(n)},  (D_1,D_2,...) \mapsto q^{-1}(D_1) + 2 q^{-1}(D_2) +... $$ 
and 
$$\psi_k: C_k^{(\mu^k)} \rightarrow S^{(n)},  (D_1,D_2,...) \mapsto D_1 + 2 D_2 + ... .$$
Then $\psi$ is a finite and birational morphism from  $B^{(\lambda)}\times C^{(\vec{\mu})}$ onto $ \overline{(S^{(n)})^\tau}_{\lambda, \vec{\mu}}$   so
 $$\psi_* IC\left( B^{(\lambda)} \times  C^{(\vec{\mu})}  \right) \cong IC(\overline{(S^{(n)})^\tau}_{\lambda, \vec{\mu}})$$ by Lemma \ref{passingtoprods}.  Since $B^{(\lambda)} \times C^{(\vec{\mu})}$ is the quotient of the non-singular variety $ B^{\ell(\lambda)} \times C^{\ell(\vec{\mu})}$ by a finite group, so 
$$  IC\left( B^{(\lambda)} \times C^{(\vec{\mu})} \right) \cong \Q_{ B^{(\lambda)} \times C^{(\vec{\mu})} } \left[ 2|\lambda| + |\vec{\mu}| \right].$$
by Lemma \ref{rathomman}. 

We deduce the canonical isomorphism of mixed Hodge structures

\begin{equation}\label{canondecompofMHS}
H^{*} ( (S^{[n]})^\tau ) \cong  \bigoplus_{\substack{ \lambda \in \PP, \vec{\mu} \in \PP_{odd}^c \\ 2|\lambda|+|\vec{\mu}| =n}}    H^{ * + 2\ell(\lambda) +\ell(\vec{\mu})-n}(  B^{(\lambda)} \times C^{(\vec{\mu})}) \left(  \frac{2 \ell(\lambda)+\ell(\vec{\mu})-n}{2}\right).
\end{equation}

We may coarsen the above decomposition by observing that for any $m \geq 0$ that 
$$ \coprod_{\substack{ \vec{\mu} \in \PP_{odd}^c \\ |\vec{\mu}|=m }} C^{(\vec{\mu})} = \coprod_{\substack{ \mu \in \PP_{odd} \\ |\mu|=m}} C^{(\mu)}, $$
so
$$ H^{*} ( (S^{[n]})^\tau ) \cong  \bigoplus_{\substack{ \lambda \in \PP, \mu \in \PP_{odd} \\ 2|\lambda|+|\mu| =n}}    H^{ * + 2\ell(\lambda) +\ell(\mu)-n}(  B^{(\lambda)} \times C^{(\mu)}) \left(  \frac{2 \ell(\lambda)+\ell(\mu)-n}{2}\right). $$

We use this formula to produce mixed Hodge polynomials. If $X$ is a quasi-projective variety, then for each $k \geq 0$ the singular cohomology $H^k(X;\C)$ admits a mixed Hodge structure, determining the Hodge filtration $$H^k(X; \C) = F_0 \supseteq F_1 \supseteq ...$$ and weight filtration $$ H^k(X; \C) \supseteq... \supseteq W_1 \supseteq W_0 \supseteq  W_{-1} ....$$ Define the mixed Hodge polynomial $$MH(X) = MH(X; x,y,t) := \sum_{i,j,k} h_{i,j,k}(X) x^i y^j t^k$$ with coefficients the mixed Hodge numbers 
\begin{equation}\label{Hodgenumbers}
h_{i,j,k}(X) :=  \dim_\C Gr^i_F Gr_{i+j}^W H^k(X;\C),
\end{equation} 
the dimensions of associated graded components of the bi-filtration.

From (\ref{canondecompofMHS}) we derive the generating function formula
\begin{eqnarray*}
 \sum_{n=0}^\infty u^n MH((S^{[n]})^\tau) &=&  \sum_{\lambda \PP,\mu \in \PP_{odd}}   u^{2|\lambda|+|\mu|} (xyt^2)^{ (n - 2 \ell(\lambda) -\ell(\mu) )/2}MH(B^{(\lambda)}) MH( C^{(\mu)})\\
 &= & \left( \sum_{\lambda \in \PP} u^{ 2 |\lambda|} (xyt^2)^{\left(|\lambda|-\ell(\lambda)\right)} MH(B^{(\lambda)}) \right)\left( \sum_{
 \mu \in \PP_{odd}} u^{  |\mu|}(xyt^2)^{(|\mu|-\ell(\mu))/2} MH(C^{(\mu)}) \right)
 \end{eqnarray*}

By MacDonald's formula \cite{M}, we know

$$ \sum_{n=0}^\infty u^n MH( X^{(n)}) =  \prod_{i,j,k \geq 0} \left(   1+(-1)^{k+1} x^i y^j t^{k}u \right)^{(-1)^{k+1}h_{i,j,k}(X)}. $$

It follows that

\begin{multline*}
 \sum_{\lambda \in \PP} u^{ 2 |\lambda|} (xyt^2)^{\left(|\lambda|-\ell(\lambda)\right)} MH(B^{(\lambda)})  \\
    =  \sum_{\lambda = (1^{a_1} 2^{a_2}...)} u^{2(a_1+2a_2+3a_3...)} (xyt^2)^{ a_2 + 2a_3+3a_4+...} MH(B^{(a_1)}) MH(B^{(a_2)})...  \\ 
    =  \prod_{m\geq 0} \sum_{a\geq 0} (u^{2m+2} (xyt^2)^{m})^a MH(B^{(a)})\\
    = \prod_{i,j,k, m \geq 0} \left(  1+ (-1)^{k+1} x^{i+m} y^{j+m} t^{k+2m} u^{2+2m}  \right)^{(-1)^{k+1}h_{i,j,k}(B)}.
\end{multline*}

Similarly

\begin{multline*}
 \sum_{\mu \in \PP_{odd}} u^{|\mu|} (xyt^2)^{(|\mu|-\ell(\mu))/2} MH(C^{(\mu)})  \\ 
 =  \sum_{\mu = (1^{a_1} 3^{a_3} ...)} u^{(a_1+3a_3+ 5a_5...)}(xyt^2)^{ a_3+2a_5+3a_7...} MH(C^{(a_1)}) MH(C^{(a_3)})...  \\ 
 =  \prod_{m\geq 0} \sum_{a\geq 0} (u^{2m+1} (xyt^2)^{m})^a MH(C^{(a)})\\
=  \prod_{i,j,k, m \geq 0} \left(   1+ (-1)^{k+1} x^{i+m} y^{j+m} t^{k+2m} u^{1+2m}  \right)^{(-1)^{k+1}h_{i,j,k}(C)}.
\end{multline*}

These combine to prove Theorem \ref{mainthm}.

\section{Connected Components of $(S^{[n]})^\tau$}\label{Connected Components of}
Let $S$ be a smooth quasi-projective surface and $\tau$ a branching involution with $C = S^\tau$ non-empty.  In this section, we classify the connected components of $(S^{[n]})^\tau$ and derive formulas for their mixed Hodge structures.  

\begin{lem}\label{cardcalc}
If $ S^\tau$ has $c>0$ connected components, then the number of connected components of $(S^{[n]})^\tau$ is $ \binom{n+c-1}{c-1} + \binom{n+c-3}{c-1} +... = \sum_{i=0}^{ [ n/2]} \binom{n+c-1-2i}{c-1}$ . 
  \end{lem}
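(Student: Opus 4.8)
The plan is to count connected components by computing $\dim_\Q H^0\big((S^{[n]})^\tau;\Q\big)$, which I will read off from the canonical decomposition (\ref{canondecompofMHS}). Setting $* = 0$ there gives
$$ H^0\big((S^{[n]})^\tau\big) \;\cong\; \bigoplus_{\substack{\lambda \in \PP,\ \vec{\mu} \in \PP_{odd}^c \\ 2|\lambda| + |\vec{\mu}| = n}} H^{\,2\ell(\lambda) + \ell(\vec{\mu}) - n}\big(B^{(\lambda)} \times C^{(\vec{\mu})}\big), $$
so the first step is to determine which summands are nonzero. Since $\ell(\lambda) \le |\lambda|$ and $\ell(\vec{\mu}) \le |\vec{\mu}|$, the cohomological degree equals $-2\big(|\lambda| - \ell(\lambda)\big) - \big(|\vec{\mu}| - \ell(\vec{\mu})\big) \le 0$, with equality if and only if every part of $\lambda$ and of every $\mu^k$ is equal to $1$; in all other cases the degree is strictly negative and the summand vanishes.

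Next I would evaluate the surviving summands. When the degree is $0$ the Tate twist in (\ref{canondecompofMHS}) is trivial, and $H^0\big(B^{(\lambda)} \times C^{(\vec{\mu})}\big) \cong \Q$ because $B$ is connected (being the image of the connected surface $S$ under $q$) and each connected component $C_k$ of $C$ is connected, so the symmetric products $B^{(\lambda)} = B^{(a)}$ and $C_k^{(\mu^k)} = C_k^{(b_k)}$, being quotients of connected varieties, are connected, and hence so is their product. Therefore $\dim_\Q H^0\big((S^{[n]})^\tau\big)$ equals the number of tuples $(a; b_1, \dots, b_c)$ of non-negative integers with $2a + b_1 + \dots + b_c = n$, where $a = |\lambda|$ and $b_k = |\mu^k|$.

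The last step is an elementary stars-and-bars count: for each fixed $a$ with $0 \le a \le [n/2]$ there are $\binom{n - 2a + c - 1}{c - 1}$ choices of $(b_1, \dots, b_c)$ with $b_1 + \dots + b_c = n - 2a$, and summing over $a = i$ yields $\sum_{i=0}^{[n/2]} \binom{n + c - 1 - 2i}{c-1}$, which is the asserted formula. I expect no serious obstacle; the one point that must be handled with care is the connectedness input in the middle step, and for that it is cleanest to use the finer indexing of (\ref{canondecompofMHS}) by tuples $\vec{\mu} = (\mu^1, \dots, \mu^c)$, so that each factor $C_k^{(\mu^k)}$ of $C^{(\vec{\mu})}$ is visibly connected, rather than the coarsened decomposition in which $C^{(\mu)}$ is a disjoint union over the distributions of points among the $C_k$.
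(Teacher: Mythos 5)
Your proof is correct. The paper's own proof is a one-line specialization: it sets $x=y=1$ and $t=0$ in the generating function (\ref{mainthmformula}), which collapses $\Phi_2(B)\Phi_1(C)$ to $(1-u^2)^{-1}(1-u)^{-c}$, and then expands the resulting series. You instead return to the decomposition (\ref{canondecompofMHS}), isolate the summands that can contribute to $H^0$ (those with every part of $\lambda$ and of each $\mu^k$ equal to $1$, the only case where the cohomological degree $-2(|\lambda|-\ell(\lambda))-(|\vec{\mu}|-\ell(\vec{\mu}))$ is not strictly negative), observe that each such summand is a single copy of $\Q$ by connectedness of $B^{(\lambda)}\times C^{(\vec{\mu})}$, and finish with stars and bars. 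The two arguments are equivalent --- both are immediate consequences of Theorem \ref{mainthm} and the decomposition underlying it --- but yours has the minor advantage of exhibiting explicitly which strata index the components, which is essentially the information the paper needs again later when it constructs the bijection $\pi_0((S^{[n]})^\tau)\cong I_{n,c}$. One small point in your favour: both arguments require $B$ (hence $S$) to be connected, since the factor $(1-u^2)^{-1}$ is really $(1-u^2)^{-\dim H^0(B)}$; you make this hypothesis explicit where the paper leaves it tacit.
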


\begin{proof}
Simply set $x=y=1$ and $t=0$ into (\ref{mainthmformula}) to get
$$ \sum_{n=0}^{\infty}  \dim_{\Q} H^0( S^{[n]})  u^n =  \frac{1}{(1-u^2)(1-u)^c} = \frac{1}{1-u^2} \left( \sum_{n=0}^{\infty}  \binom{n+c-1}{c-1} u^n \right)  $$
and the formula follows.
\end{proof}

For subsets $ U \subset Z$ define $$\mathcal{O}_Z( U) := \oplus_{p \in U}  \mathcal{O}_{Z,p}.$$
If $Z \in (S^{[n]})^\tau$ and $\tau(U) = U$, then  $\mathcal{O}_Z( U) $ decomposes into $\pm 1$-eigenspaces and we define the index $$Ind_Z(U):= \dim_\C \mathcal{O}_Z(U)_+ - \dim_\C \mathcal{O}_Z(U)_+.$$  Since $\tau$ sends $\mathcal{O}_{Z,p}$ to $\mathcal{O}_{Z,\tau(p)}$, points outside of $C = S^\tau$ contribute nothing to the index, so $$Ind_Z(U)=  \sum_{p \in U \cap C}  Ind_Z(p) = Ind_Z(U \cap C). $$

\begin{lem}
The function $Ind:  (S^{[n]})^\tau \rightarrow \Z^c$,  $Ind(Z) = ( Ind_Z(C_1),...,Ind_Z(C_c))$ is continuous with respect to the analytic topology on $(S^{[n]})^\tau$ and the discrete topology on $\Z^c$.
\end{lem}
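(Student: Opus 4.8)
The plan is to prove that $\mathrm{Ind}$ is locally constant by localizing, near any fixed point $Z_0 \in (S^{[n]})^\tau$, to a coherent sheaf with involution over a small analytic neighbourhood of $Z_0$, and then using that the rank of an idempotent endomorphism of a vector bundle (being its trace) is locally constant. Throughout we work in the analytic topology, and we use that $(S^{[n]})^\tau$ is smooth.

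First I would fix $Z_0 \in (S^{[n]})^\tau$. Since $\mathrm{supp}(Z_0)$ is finite, I can choose for each $a \in \mathrm{supp}(Z_0)$ a small open analytic neighbourhood $V_a \ni a$ in $S$ with compact closure such that: the closures $\overline{V_a}$ are pairwise disjoint; $\tau(V_a) = V_{\tau(a)}$; if $a \in C$ then $V_a$ is $\tau$-invariant and $\overline{V_a} \cap C$ is contained in the connected component $C_{k(a)}$ containing $a$ (possible because each $C_k$ is open in $C$ and $C$ is closed in $S$); and if $a \notin C$ then $\overline{V_a} \cap C = \emptyset$. Writing $\mathcal{V} := \bigcup_a V_a$, its boundary $\partial\mathcal{V} = \bigcup_a \partial V_a$ is a $\tau$-invariant closed subset of $S$ disjoint from $\mathrm{supp}(Z_0)$. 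Because the Hilbert--Chow morphism $\pi$ is continuous and $S^n \to S^{(n)}$ is finite (hence closed), the locus of $Z \in S^{[n]}$ with $\mathrm{supp}(Z) \subseteq \mathcal{V}$ and $\mathrm{supp}(Z) \cap \partial\mathcal{V} = \emptyset$ is open and contains $Z_0$. Restrict the universal subscheme $\mathcal{W} \subseteq S \times S^{[n]}$, which carries a $\tau$-action covering $\tau$ on $S$, to this open locus. There, for each $a$ the piece $\mathcal{W} \cap (V_a \times -) = \mathcal{W} \cap (\overline{V_a} \times -)$ is simultaneously open and closed in $\mathcal{W}$, hence is a finite flat subfamily; since the degree of a finite flat family is locally constant and equals $n_a := \dim_\C \mathcal{O}_{Z_0}(V_a)$ at $Z_0$, after shrinking I obtain a $\tau$-invariant open $\mathcal{U} \ni Z_0$ over which each $\mathcal{Z}_a := \mathcal{W} \cap (V_a \times \mathcal{U})$ is finite flat of degree $n_a$.

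Now set $\mathcal{U}_\tau := \mathcal{U} \cap (S^{[n]})^\tau$ and, restricting everything to $\mathcal{U}_\tau$, let $\mathcal{F}_a := (\mathrm{pr}_2)_* \mathcal{O}_{\mathcal{Z}_a}$, a vector bundle of rank $n_a$ on $\mathcal{U}_\tau$ whose fibre at $Z$ is $\mathcal{O}_Z(V_a)$ by base change. For $a \in C$, $V_a$ is $\tau$-invariant, so $\tau$ acts on $\mathcal{Z}_a$ and hence on $\mathcal{F}_a$, inducing the natural involution on each fibre $\mathcal{O}_Z(V_a)$. The endomorphism $e_a := \tfrac12(\mathrm{Id} + \tau)$ of $\mathcal{F}_a$ is idempotent, so $\mathrm{rk}(e_a) = \mathrm{tr}(e_a)$ is a continuous $\Z$-valued, hence locally constant, function on $\mathcal{U}_\tau$; its value at $Z$ is $\dim_\C \mathcal{O}_Z(V_a)_+$. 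Therefore $\mathrm{Ind}_Z(V_a) = 2\dim_\C\mathcal{O}_Z(V_a)_+ - n_a$ is locally constant on $\mathcal{U}_\tau$. Finally, for $Z \in \mathcal{U}_\tau$ the support of $Z$ lies in $\bigcup_a V_a$, and $V_a \cap C$ is either empty (if $a \notin C$) or contained in $C_{k(a)}$; combining this with the identity $\mathrm{Ind}_Z(V_a) = \mathrm{Ind}_Z(V_a \cap C)$ established in the text gives $\mathrm{Ind}_Z(C_k) = \sum_{a \in \mathrm{supp}(Z_0) \cap C_k} \mathrm{Ind}_Z(V_a)$, which is thus locally constant on $\mathcal{U}_\tau$. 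Since $Z_0$ was arbitrary, $\mathrm{Ind}$ is continuous.

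The step I expect to be the main obstacle is exhibiting $\mathcal{Z}_a$ as a genuine finite flat subfamily over a neighbourhood of $Z_0$ (so that $\mathcal{F}_a$ is honestly a vector bundle with a $\tau$-action): this is where one must combine the analytic-topology openness of the condition ``support contained in the open set $\mathcal{V}$, disjoint from $\partial\mathcal{V}$'' with the algebraic fact that a clopen piece of a finite flat family is again finite flat of locally constant degree. Once that bookkeeping is in place, the conclusion is formal.
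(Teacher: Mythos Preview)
Your proposal is correct and follows essentially the same approach as the paper: both restrict the universal subscheme to obtain a locally free sheaf with $\tau$-action over an analytic neighbourhood of $Z_0$, and both conclude by noting that the rank of the $+1$-eigenspace is locally constant. The only cosmetic differences are that the paper uses a single tubular neighbourhood of each component $C_k$ rather than small neighbourhoods of the individual support points, and that you make the local-constancy step more explicit via the idempotent--trace argument $\mathrm{rk}(e_a)=\mathrm{tr}(e_a)$.
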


\begin{proof}
It is enough to prove that $Ind_Z(C_k): (S^{[n]})^\tau \rightarrow \Z $ is continuous for all $k$.  

Let $\widetilde{S^{[n]}} \subseteq S^{[n]} \times S$ be the universal subscheme. The (ordinary) pushforward of $\mathcal{O}_{\widetilde{S^{[n]}}}$ under the projection to $S^{[n]}$ is a $\tau$-equivariant vector bundle $E$ with fibre $E_Z = \mathcal{O}_Z(Z)$. 

Any $Z_0 \in (S^{[n]})^\tau$ has finite support $\supp(Z_0)$, we can choose an analytic tubular open neighbourhood $C \subset U \subset S$, for which  $U \cap C = C_k$ and $\supp(Z_0)  \cap \partial U  = \emptyset$, where $\partial U: = \overline{U} \setminus U$. Then $$V := \{ Z  \in (S^{[n]})^\tau | \supp(Z) \cap \partial U = \emptyset\}$$ is an open neighbourhood of $Z_0$.  We have a vector subbundle $F \subseteq E|_{V}$ of locally constant rank, with fibres $F_Z := \mathcal{O}_Z(U)$, and $F$ is equipped with a continuous fibre-wise action by $\tau$.  The $\tau$-invariant subbundle $F^\tau$ therefore also has locally constant rank on $V$. Therefore $Ind_Z(C_k) = Ind_Z(U ) =  2 \dim_\C  F_Z^\tau - \dim_\C F_Z$ is continuous on $V$, hence also on $(S^{[n]})^\tau$.
\end{proof}

Define
$$I_{n,c} :=  \{ (n_1,...,n_c) \in \Z^c_{\geq 0}  |  \left( n-\sum_{k=1}^c n_i \right) \text{ is even and non-negative}\} .$$

\begin{prop}
If $S^\tau$ is non-empty, then the range of $Ind:  (S^{[n]})^\tau \rightarrow \Z^c$ is equal to $I_{n,c}$ and determines a bijection between $\pi_0( (S^{[n]})^\tau)$ and $I_{n,c}$.
\end{prop}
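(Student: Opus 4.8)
The plan is to establish two things: first, that $Ind$ takes values in $I_{n,c}$ and is surjective onto it; second, that the connected components of $(S^{[n]})^\tau$ are precisely the fibers of $Ind$. Since we already know $Ind$ is continuous with the discrete topology on $\Z^c$, it is constant on connected components, so it factors through a map $\pi_0((S^{[n]})^\tau) \to \Z^c$. By Lemma \ref{cardcalc} we know the cardinality of $\pi_0((S^{[n]})^\tau)$, and one checks directly that $|I_{n,c}| = \sum_{i=0}^{[n/2]}\binom{n+c-1-2i}{c-1}$: choosing $(n_1,\dots,n_c)$ with $\sum n_k = n - 2i$ for some $0 \le i \le [n/2]$ amounts to a weak composition of $n-2i$ into $c$ parts, giving $\binom{n-2i+c-1}{c-1}$ choices. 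So it suffices to prove that the induced map $\pi_0((S^{[n]})^\tau) \to \Z^c$ is injective with image inside $I_{n,c}$; a counting argument then forces the image to be all of $I_{n,c}$ and the map to be a bijection.

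For the containment $Ind((S^{[n]})^\tau) \subseteq I_{n,c}$: each coordinate $Ind_Z(C_k) = \dim\mathcal{O}_Z(C_k)_+ - \dim\mathcal{O}_Z(C_k)_-$ is at most $\dim\mathcal{O}_Z(C_k)$, a nonnegative integer, and the total $\sum_k \dim\mathcal{O}_Z(C_k) = \dim\mathcal{O}_Z(\supp(Z)\cap C)$ has the same parity as $\sum_k Ind_Z(C_k)$, since $\dim_+ - \dim_- \equiv \dim_+ + \dim_-$. Moreover $n - \sum_k Ind_Z(C_k) = (n - \dim\mathcal{O}_Z(Z\cap C)) + 2\dim\mathcal{O}_Z(Z\cap C)_-$; the first term counts (with multiplicity) the length of $Z$ away from $C$, which is even because such points come in $\tau$-orbit pairs $\{p,\tau(p)\}$ with equal local lengths, and the second term is manifestly even and nonnegative. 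Hence each $Ind_Z(C_k) \ge 0$ and $n - \sum_k Ind_Z(C_k)$ is even and nonnegative, i.e. $Ind(Z) \in I_{n,c}$.

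The main work is injectivity of $\pi_0((S^{[n]})^\tau) \to \Z^c$, equivalently: the fiber $Ind^{-1}(\vec n)$ is connected (when nonempty) for each $\vec n \in I_{n,c}$. I would do this by exhibiting a canonical point in each fiber and connecting an arbitrary $Z$ to it by a path inside the fiber. Given $Z$, decompose its cycle as $\pi(Z) = \sum_i \nu_i(p_i + \tau p_i) + \sum_j \mu_j q_j$ with $q_j \in C$; using the product description $\pi^{-1}(D)^\tau \cong \prod_i F^{\nu_i} \times \prod_j (F^{\mu_j})^\tau$ from \S\ref{Hilbert schemes and the Hilbert-Chow morphism}, and the fact that $F^\nu$ is irreducible hence connected (equation (\ref{dimfn}) and the remark following it) and that each $(F^\mu)^\tau$ is connected (it is a cell complex with a single $0$-cell, namely $Z_{(1^\mu)}$, by the Bialynicki-Birula stratification (\ref{stratoftauinv})), one can first move $Z$, along a path in $(S^{[n]})^\tau$ that only slides support points and never changes the index, into a configuration supported on a single point of each $C_k$ together with $\tau$-orbit pairs; then within the fixed fiber over that configuration one can connect to a standard monomial-type subscheme $Z_\lambda$-style point realizing the given index. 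The index $Ind_Z(C_k)$ restricted to $(F^\mu)^\tau$ (with $C_k$ the $\tau$-fixed coordinate axis $\{x=0\}$) is computed by the $T^2$-weight data: on the component containing $Z_{(1^\mu)}$ the index equals the number of even-leg boxes minus odd-leg boxes of the relevant partition, and I would check that this index is itself constant on $(F^\mu)^\tau$ (so all of $(F^\mu)^\tau$ has one index value, matching the parity constraint $\mu \equiv Ind \bmod 2$ with $\mu - Ind \ge 0$), hence sliding the $\mu_j q_j$ together does not obstruct connecting within a fixed index class. The hard part will be organizing this path-construction cleanly — in particular verifying that the index is locally constant on each Hilbert-Chow fiber factor and that the "sliding" paths genuinely stay in $(S^{[n]})^\tau$ and preserve $Ind$ — rather than any single computation; once the fibers of $Ind$ are shown connected and the containment in $I_{n,c}$ is in hand, the cardinality count from Lemma \ref{cardcalc} closes the argument by forcing bijectivity.
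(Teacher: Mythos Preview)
Your approach has a genuine error: the claim that $(F^{\mu})^{\tau}$ is connected is false for $\mu \geq 2$. From the Poincar\'e polynomial formula (\ref{Ppsom}) one has
\[
\dim_{\Q} H^{0}\big((F^{\mu})^{\tau}\big) \;=\; \#\{\lambda \vdash \mu : \text{every part of }\lambda\text{ lies in }\{1,2\}\} \;=\; \lfloor \mu/2\rfloor + 1,
\]
since $n - \ell(\lambda) - \ell_{even}(\lambda) = 0$ forces each odd part to equal $1$ and each even part to equal $2$. In particular the Bialynicki--Birula stratification (\ref{stratoftauinv}) has $\lfloor \mu/2\rfloor +1$ zero-cells, not only $Z_{(1^{\mu})}$; because all cells have even real dimension, these zero-cells lie in pairwise distinct components. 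It follows that the index is \emph{not} constant on $(F^{\mu})^{\tau}$ --- indeed the distinct components are separated precisely by the index --- so the later claim that you ``would check that this index is itself constant on $(F^{\mu})^{\tau}$'' cannot succeed, and the sliding/path argument for connectedness of $Ind^{-1}(\vec n)$ breaks down as written. (A secondary gap: your verification that $Ind(Z) \in I_{n,c}$ asserts $Ind_Z(C_k)\geq 0$ without justification; this is true, via the observation that multiplication by a local anti-invariant coordinate gives a surjection $\mathcal{O}_Z(C_k)_{+}\twoheadrightarrow \mathcal{O}_Z(C_k)_{-}$, but it needs to be said.)

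The paper avoids all of this by reversing the direction of the argument. Rather than showing the range is contained in $I_{n,c}$ and the induced map on $\pi_0$ is injective, it shows the range \emph{contains} $I_{n,c}$: for each $(n_1,\ldots,n_c)\in I_{n,c}$ one writes down a reduced $\tau$-invariant length-$n$ subscheme (a union of $n_0 = (n-\sum n_k)/2$ free $\tau$-orbits in $S\setminus C$ together with $n_k$ reduced points on each $C_k$) whose index is visibly $(n_1,\ldots,n_c)$. Since $Ind$ factors through $\pi_0$ and $|\pi_0((S^{[n]})^{\tau})| = |I_{n,c}|$ by Lemma~\ref{cardcalc}, surjectivity onto $I_{n,c}$ forces the induced map $\pi_0((S^{[n]})^{\tau})\to I_{n,c}$ to be a bijection. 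This sidesteps any analysis of the component structure of $(F^{\mu})^{\tau}$.
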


\begin{proof}

There is an obvious bijection between $I_{n,c}$ and $ \cup_{i\geq 0}  P_{n-2i,c} $ where $P_{n-2i,c}$ is the set of ordered $c$-partitions of $n-2i$. Therefore, by Lemma \ref{cardcalc},  $\pi_0((S^{[n]})^\tau)$ and $I_{n,c}$ have equal cardinality and it suffices to show that $I_{n,c}$ is contained in the range of $Ind$. 

Choose $(n_1,...,n_c)\in I_{n,c}$ and set $n_0 = (n - \sum_{k=1}^c n_k)/2$. Choose points $p_1,...,p_{n_0}, \tau(p_1),...,\tau(p_{n_0}) \in S \setminus C$,  and $q_1^k,...,q_{n_k}^k \in C_k$ all distinct.  Then the cycle $$D = \sum_{i} p_i +\tau(\pi_i) + \sum_{j,k} q_j^k$$ determines a unique scheme $Z_D \in (S^{[n]})^\tau$ such that $\pi( Z_D)= D$.  Then $\mathcal{O}_{Z_D}(C_k) \cong \C^{n_k}$ with the trivial $\tau$-action, so  $Ind(Z_D) = (n_1,...,n_c)$. 
\end{proof}

For $(n_1,...,n_c) \in I_{n,c}$, denote $(S^{[n]})^\tau_{(n_1,...n_c)}$ the corresponding connected component of $(S^{[n]})^\tau$.

\begin{prop}

There is a canonical isomorphism of mixed Hodge structures
$$  H^{*} ( (S^{[n]})^\tau_{(n_1,...,n_c)} ) \cong  \bigoplus_{\substack{ \lambda \in \PP, \vec{\mu} \in \PP_{odd}^c \\ 2|\lambda|+|\vec{\mu}| =n\\  \ell(\vec{\mu}) = (n_1,...,n_c)}}    H^{ * + 2\ell(\lambda) +\ell(\vec{\mu})-n}(  B^{(\lambda)} \times C^{(\vec{\mu})}) \left(  \frac{2 \ell(\lambda)+\ell(\vec{\mu})-n}{2}\right) $$
\end{prop}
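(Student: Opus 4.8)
The plan is to refine the decomposition of the whole fixed point set established in the previous section and keep track of which summand lands in which connected component. Recall that the canonical isomorphism (\ref{canondecompofMHS}) arose from the Decomposition Theorem applied to $\pi_\tau\colon (S^{[n]})^\tau \to (S^{(n)})^\tau$, with one $IC$-summand $IC(\overline{(S^{(n)})^\tau}_{\lambda,\vec\mu})$ for each pair $(\lambda,\vec\mu)$ with $\vec\mu\in\PP_{odd}^c$ and $2|\lambda|+|\vec\mu|=n$. First I would observe that each stratum closure $\overline{(S^{(n)})^\tau}_{\lambda,\vec\mu}$ is irreducible, hence connected, and that $(S^{(n)})^\tau$ itself need not be; more importantly, the point is to read off which connected component of the \emph{source} $(S^{[n]})^\tau$ supports the summand indexed by $(\lambda,\vec\mu)$.

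The key computation is to evaluate $Ind$ on the generic point of $\pi_\tau^{-1}$ of the open stratum $(S^{(n)})^\tau_{\lambda,\vec\mu}$. Over a divisor $D = \sum_i \lambda_i(p_i+\tau p_i) + \sum_{k,j}\mu^k_j q^k_j$ in this stratum, the fibre is $\prod_i F^{\lambda_i}\times\prod_{k,j}(F^{\mu^k_j})^\tau$, and for a generic $Z$ in this fibre the local contribution at each $q^k_j\in C_k$ to $Ind_Z(C_k)$ is the index of the $\tau$-action on $\mathcal{O}_{Z,q^k_j}$. I would argue that since $Ind$ is locally constant (by the preceding Lemma), it is enough to compute it at \emph{one} point of the component, e.g.\ the curvilinear or any convenient $Z$ supported on $C_k$ of length $\mu^k_j$; the index of the standard $\tau$-action on $\C[x,y]/(x,y)^{\mu^k_j}$-type subschemes lying in $C$ (i.e.\ on a $\tau$-fixed length-$m$ subscheme of a smooth curve) is exactly $m$, since such a subscheme is contained in $C=S^\tau$ and $\tau$ acts trivially on its structure sheaf. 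Hence $Ind_Z(C_k) = \sum_j \mu^k_j = |\mu^k| = \ell(\vec\mu)_k$ only when... here I must be careful: $|\mu^k|$ versus $\ell(\mu^k)$. The assertion in the Proposition is $\ell(\vec\mu) = (n_1,\dots,n_c)$, so I need each $\tau$-fixed length-$\mu^k_j$ subscheme supported at a single point of $C_k$ to contribute index $1$, not $\mu^k_j$; this happens precisely because for $\mu^k_j$ odd the fixed $(F^{\mu^k_j})^\tau$ of top dimension is the curvilinear component $\overline{W^{-,\tau}_{(\mu^k_j)}}$ (Lemma \ref{maxcompfixedpoint2}), whose generic subscheme is \emph{not} contained in $C$ but is curvilinear transverse to $C$, and a direct local computation with $\C[x,y]/I$ and $\tau(x,y)=(-x,y)$ shows the index of such a length-$m$ ($m$ odd) subscheme equals $1$. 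Summing over $j$ gives $\ell(\mu^k)$, and over the points $p_i\notin C$ the contribution is zero. So the generic $Z$ in the stratum over $(S^{(n)})^\tau_{\lambda,\vec\mu}$ has $Ind(Z) = \ell(\vec\mu)$.

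With that in hand the Proposition follows formally: the summand of $\pi_{\tau*}IC((S^{[n]})^\tau)$ indexed by $(\lambda,\vec\mu)$ is supported on the closure of the stratum, whose generic point maps (under the inclusion of components of the source, which is just the decomposition $(S^{[n]})^\tau = \coprod_{\vec n\in I_{n,c}} (S^{[n]})^\tau_{\vec n}$) into $(S^{[n]})^\tau_{\ell(\vec\mu)}$; since $Ind$ is constant on connected components, the entire contribution of this $IC$-summand to $H^*$ lies in $H^*((S^{[n]})^\tau_{\ell(\vec\mu)})$. Restricting the global isomorphism (\ref{canondecompofMHS}) to the direct summand of the left-hand side corresponding to the component $(n_1,\dots,n_c)$ then leaves exactly those terms with $\ell(\vec\mu)=(n_1,\dots,n_c)$, which is the claimed formula. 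I would phrase this cleanly by noting that both sides of (\ref{canondecompofMHS}) are $\Z^c$-graded — the left by $Ind$, the right by $\ell(\vec\mu)$ — and the isomorphism respects this grading because it is induced by $\pi_\tau$, which preserves each component.

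The main obstacle is the local index computation: verifying that for $m$ odd, a generic $\tau$-fixed length-$m$ subscheme of $\C^2$ supported at the origin with $\tau(x,y)=(-x,y)$ has index exactly $1$, and understanding precisely which $\tau$-fixed subschemes occur as we move within the fibre $(F^m)^\tau$ and why $Ind$ being locally constant forces them all to have the same index. This is where the structure of the Bialynicki-Birula strata $W^{-,\tau}_\mu$ from Section \ref{The case S= C2} has to be used: one checks the index on the fixed points $Z_\mu$ (monomial ideals), where it can be read off directly from the eigenvalue decomposition of $\C[x,y]/I_\mu$ under $(x,y)\mapsto(-x,y)$, and local constancy propagates it to the whole stratum closure. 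I expect the bookkeeping — relating $\ell(\mu)$, the count of parts, to the sum of indices over support points — to be the delicate point, with everything else being a matter of assembling results already proved.
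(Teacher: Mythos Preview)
Your proposal is correct and follows essentially the same route as the paper: apply the semismall decomposition component by component, and determine which connected component of the source contains the (unique) top-dimensional irreducible piece of the fibre over each relevant stratum $(S^{(n)})^\tau_{\lambda,\vec\mu}$. The paper isolates the local index computation you flag as the ``main obstacle'' into a separate lemma (Lemma~\ref{maxcompind}): for $m$ odd, any $Z$ in the top-dimensional component $\overline{W^{-,\tau}_{(m)}}$ of $(F^m)^\tau$ has $Ind_Z(Z)=1$, proved exactly as you suggest---evaluate at the monomial fixed point $Z_{(m)}$ (where $Ind_{Z_\lambda}(Z_\lambda)=\ell_{odd}(\lambda)$ in general) and propagate by local constancy---so that summing over the $\ell(\mu^k)$ support points on $C_k$ gives $Ind_Z(C_k)=\ell(\mu^k)$.
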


\begin{proof}
Following the proof of (\ref{canondecompofMHS}) we get an canonical isomorphism of mixed Hodge structures
$$  H^{*} ( (S^{[n]})^\tau_{(n_1,...,n_c)} ) \cong  \bigoplus_{(\lambda,\vec{\mu}) \in J}    H^{ * + 2\ell(\lambda) +\ell(\vec{\mu})-n}(  B^{(\lambda)} \times C^{(\vec{\mu})}) \left(  \frac{2 \ell(\lambda)+\ell(\vec{\mu})-n}{2}\right) $$
where $J$ the set of order pairs $ ( \lambda, \vec{\mu}) \in \PP \times \PP_{odd}^c$ with $2|\lambda|+|\vec{\mu}| =n$ for which the maximal dimension component of the fibre above   $(S^{(n)})^\tau_{\lambda, \vec{\mu}}$ is contained in $(S^{[n]})^\tau_{(n_1,...,n_c)}  $. 

Choose $ D \in (S^{(n)})^\tau_{\lambda, \vec{\mu}}$ and $Z \in \pi_\tau^{-1}(D)$.  By definition of  $(S^{(n)})^\tau_{\lambda, \vec{\mu}}$, the intersection of the support of $Z$ with $C_k$ is a set of $n_k$ points $\{q_1,...,q_{n_k}\}$. Furthermore, $Ind_Z(C_k) = \sum_{i=1}^{n_k} Ind_Z( q_i)$.  From Lemma \ref{maxcompind}, we see that $Ind_Z( q_i) = 1$ for each $q_i$, so $Ind_Z(C_k) = n_k$. 
\end{proof}

Recall from (\ref{stratoftauinv}) the stratification $ F^n  = \bigcup_{\lambda \in \PP_n} W_\lambda^{-,\tau} $. From Lemma \ref{maxcompfixedpoint2}, we know that $\overline{W_{(n)}^{-,\tau}}$ is only irreducible component of $F^n$ of top dimension.

\begin{lem}\label{maxcompind}
If $n$ is odd, and $Z \in (F^n)^\tau$ lies in the highest dimensional irreducible component, then $Ind_Z(Z) =1$. 
\end{lem}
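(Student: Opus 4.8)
The plan is to analyze the highest-dimensional component $\overline{W^{-,\tau}_{(n)}}$ of $(F^n)^\tau$ concretely using the Bialynicki-Birula stratification from \S\ref{The case S= C2}. Since $n$ is odd, Lemma \ref{maxcompfixedpoint2} tells us this component is the closure of the single cell $W^{-,\tau}_{(n)}$, attached to the monomial-ideal fixed point $Z_{(n)}$ corresponding to the partition $(n)$, whose ideal is $I_{(n)} = (y, x^n)$ (that is, the subscheme supported at the origin with coordinate ring $\C[x]/(x^n)$). First I would compute $Ind_{Z_{(n)}}(Z_{(n)})$ directly: the coordinate ring $\C[x]/(x^n)$ has basis $1, x, x^2, \ldots, x^{n-1}$, and since $\tau(x,y) = (-x,y)$ acts by $x \mapsto -x$, the eigenvalue of $x^j$ is $(-1)^j$. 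With $n$ odd, there are $(n+1)/2$ even exponents and $(n-1)/2$ odd exponents, so $Ind_{Z_{(n)}}(Z_{(n)}) = (n+1)/2 - (n-1)/2 = 1$.

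Next, the key point is that the index is locally constant on $(S^{[n]})^\tau$ (this is exactly the content of the preceding Lemma asserting $Ind$ is continuous into the discrete set $\Z^c$; in the local model $S = \C^2$ with a single fixed curve $C = \{x=0\}$, $c=1$ and $Ind_Z(Z) = Ind_Z(C)$). Therefore $Ind_Z(Z)$ is constant on each connected component of $(F^n)^\tau$, and in particular on each irreducible component, since these are irreducible hence connected. So it suffices to show that $Z_{(n)}$ lies in the highest-dimensional irreducible component $\overline{W^{-,\tau}_{(n)}}$ — which is immediate because $W^{-,\tau}_{(n)}$ is by definition the set of points flowing to $Z_{(n)}$ under the $G$-action, so $Z_{(n)} \in W^{-,\tau}_{(n)} \subseteq \overline{W^{-,\tau}_{(n)}}$ (it is the unique torus-fixed point in the open cell). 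Hence for any $Z$ in the top-dimensional component, $Ind_Z(Z) = Ind_{Z_{(n)}}(Z_{(n)}) = 1$.

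I expect the routine-but-slightly-delicate step to be confirming that "$Ind$ is locally constant" really does apply verbatim to $(F^n)^\tau$ sitting inside the local model: one must observe that $(F^n)^\tau = \pi_\tau^{-1}(n\cdot(0,0))$ is a closed subvariety of the relevant $(S^{[n]})^\tau$ with $S^\tau$ the coordinate axis $\{x=0\}$, a single connected curve, so that the continuity lemma gives local constancy of $Z \mapsto Ind_Z(C) = Ind_Z(Z)$ on $(F^n)^\tau$. Granting that, the only real computation is the eigenvalue count for $\C[x]/(x^n)$ above, which is elementary. An alternative to invoking the continuity lemma would be to note that $\overline{W^{-,\tau}_{(n)}}$ is irreducible with $Z_{(n)}$ in its smooth open cell $W^{-,\tau}_{(n)} \cong \C^{d^{-,\tau}_{(n)}}$, and that the vector bundle $F^\tau \subseteq E$ from the previous lemma restricts to a bundle of constant rank on the irreducible variety $\overline{W^{-,\tau}_{(n)}}$, so the index is constant there; either route finishes the proof.
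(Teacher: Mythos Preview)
Your argument is correct and is essentially the paper's own proof: evaluate $Ind$ at the torus fixed point of the top cell, then use that $Ind$ is constant on connected components of $(F^n)^\tau$ to extend to all of $\overline{W^{-,\tau}_{(n)}}$. The paper additionally records the general formula $Ind_{Z_\lambda}(Z_\lambda)=\ell_{odd}(\lambda)$ for every $\lambda\in\PP_n$ before specializing to $\lambda=(n)$, whereas you compute only the case $\lambda=(n)$ directly; either suffices. (One minor caveat: your $I_{(n)}=(y,x^n)$ is the $x\leftrightarrow y$ swap of the paper's displayed convention $I_{(n)}=(y^n,x)$. Your version is the one consistent with the dimension formula $d^{-,\tau}_{(n)}=(n-1)/2$ and with the asserted value $Ind_{Z_{(n)}}=\ell_{odd}((n))=1$, so the discrepancy reflects an $x\leftrightarrow y$ slip in the paper's definition of $I_\lambda$ rather than an error in your computation.)
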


\begin{proof}
Clearly that $\overline{W_{\lambda}^{-,\tau}}$ is connected for all $\lambda \in \PP_n$, and $Ind_Z(Z)$ is constant on connected components of $(F^n)^\tau$. At $Z_\lambda \in W_{(\lambda)}^{-,\tau} $ we have $ \mathcal{O}_{Z_\lambda} (Z_{\lambda}) \cong \C[x,y] / I_\lambda$, and under the action of $\tau$ this decomposes into  $+1$ and $-1$ eigenspaces, of multiplicities $(n + \ell_{odd}(\lambda))/2$ and $(n - \ell_{odd}(\lambda))/2$ respectively, where $\ell_{odd}(\lambda)$ is the number of parts $\lambda_j$ that are odd. Therefore  $Ind_{Z_\lambda}(Z_\lambda) = \ell_{odd}(\lambda) = Ind_{Z}(Z) $ for all $Z \in \overline{W_{\lambda}^{-,\tau}}$. In particular, when $n$ is odd  and $Z \in \overline{W_{(n)}^{-,\tau}}$ we have  $Ind_{Z}(Z) = 1$.  
\end{proof}

\section{Examples of anti-symplectic involutions on surfaces}\label{Examples of anti-symplectic involutions on surfaces}

Anti-symplectic involutions of K3 surfaces were classified by Nikulin \cite{Ni, Ni2} (nicely summarized in \cite{AST}). These are necessarily algebraic and are topologically determined by the isometry type of the $\tau$-invariant sublattice $NS^\tau$ of the Neron-Severi lattice $NS$ of the K3 surface. Such sublattices are classified by invariants $(r,a,\delta)$, where $r$ is the rank of $NS^\tau$, $2^a = |(NS^\tau)^*/NS^\tau|$ is its discriminant, and $\delta =0$ if $t^2 \in \Z$ for all $t \in (NS^\tau)^*$ and $\delta=1$ otherwise. There are 75 realizable invariants $(r,a,\delta)$, which can be found in (\cite{AST} Figure 1). In particular, the pair $(r,a)$ is realized if and only if
\begin{itemize}
\item $r \equiv a ~mod~2$, $a \leq r \leq 22-a$ and $r \geq 2$, or
\item $(r,a) \in \{ (1,1), (3,1), (9,1), (11,1), (17,1), (19,1),(2,0), (10,0), (18,0)\}.$ 
\end{itemize}

In these examples,

\begin{enumerate}
\item  If $(r,a,\delta) = (10,10,0)$, then $C = \emptyset$ and $B$ is an Enriques surface.
\item  If $(r,a,\delta)=(10,8,0)$, then $C$ is a disjoint union of two elliptic curves and $B$ is a rational surface with $\dim H^2(B) =10$.
\item In all remaining cases of $(r,a,\delta)$, $C$ is the disjoint union of a genus curve and $ k$-many rational curves, where $g= (22-r-a)/2$ and $k = (r-a)/2$, and $B$ is a rational surface with $\dim H^2(B) = r$.
 \end{enumerate}
The Hodge polynomial of $(S^{[n]})^\tau$ is easily determined from this classification using Theorem \ref{mainthm}.  

In \cite{FJM}, anti-symplectic involutions were considered in the context of mirror symmetry.  They prove that if $(S,\tau)$ and $(\check{S},\check{\tau})$ are a mirror pair of K3 surfaces with anti-symplectic involution, then their invariants $(r,a,\delta)$ are the same ( \cite{FJM} Example 5.1). An easy consequence is the following.

\begin{cor}
If $(S,\tau)$ and $(\check{S},\check{\tau})$ are a mirror pair of K3 surfaces with anti-symplectic involution, then for all $n\geq0$ there is an equality of Hodge polynomials
$$ MH( (S^{[n]})^\tau) = MH( (\check{S}^{[n]})^{\check{\tau}}). $$
\end{cor}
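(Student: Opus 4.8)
The plan is to deduce this corollary directly from Theorem \ref{mainthm} together with the cited result of \cite{FJM} that a mirror pair of K3 surfaces with anti-symplectic involution has the same invariants $(r,a,\delta)$. The strategy is to observe that the right-hand side $\Phi_2(B)\Phi_1(C)$ of (\ref{mainthmformula}) depends only on the mixed Hodge structures of $H^*(B;\Q)$ and $H^*(C;\Q)$, so it suffices to show that these mixed Hodge structures are determined by $(r,a,\delta)$.

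First I would invoke the classification of Nikulin recalled above: for a K3 surface $S$ with anti-symplectic involution $\tau$ of type $(r,a,\delta)$, the branch curve $C = S^\tau$ is described explicitly (cases (1), (2), (3)) in terms of $(r,a,\delta)$ alone — it is either empty, a disjoint union of two elliptic curves, or the disjoint union of a genus $g=(22-r-a)/2$ curve with $k=(r-a)/2$ rational curves. In each case the topological type, and hence the (pure) Hodge structure, of $C$ is fixed by $(r,a,\delta)$: a smooth projective curve's cohomology is determined by its genus. So $MH(C)$ depends only on $(r,a,\delta)$. Next I would record that $B = S/\tau$ is likewise determined up to the relevant invariants: in case (1) it is an Enriques surface, and in the remaining cases it is a rational surface with $\dim H^2(B) = r$ (case (3)) or $=10$ (case (2)). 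Since $B$ is smooth projective, its odd cohomology vanishes, $H^0$ and $H^4$ are one-dimensional of types $(0,0)$ and $(2,2)$, and $H^2(B)$ is an algebraic (hence type $(1,1)$) Hodge structure — for a rational surface $b_2 = \dim H^2$ already pins down the full mixed Hodge polynomial, and for an Enriques surface $b_2 = 10$ with $H^2$ of type $(1,1)$ does the same. Thus $MH(B)$ also depends only on $(r,a,\delta)$.

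Assembling these: if $(S,\tau)$ and $(\check S,\check\tau)$ share the invariant $(r,a,\delta)$, then by the above $MH(C) = MH(\check C)$ and $MH(B) = MH(\check B)$ as mixed Hodge polynomials. Since the generating function in Theorem \ref{mainthm} expresses $\sum_n u^n MH((S^{[n]})^\tau)$ as $\Phi_2(B)\Phi_1(C)$, where $\Phi_a(X)$ is a fixed universal expression in the Hodge numbers $h_{i,j,k}(X)$, we get equality of the two generating functions, and comparing coefficients of $u^n$ yields $MH((S^{[n]})^\tau) = MH((\check S^{[n]})^{\check\tau})$ for every $n \geq 0$. Finally \cite{FJM} Example 5.1 supplies the hypothesis that a mirror pair indeed has matching $(r,a,\delta)$, completing the argument.

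The only real subtlety — and where I would be slightly careful rather than wave hands — is checking that $MH(B)$ and $MH(C)$ are genuinely determined by the discrete data, i.e. that Nikulin's classification does pin down not just numerical invariants like Betti numbers but the full mixed (here pure, since everything is smooth projective) Hodge structures. For the curves this is automatic. For $B$ one uses that all the surfaces arising are either Enriques or rational, so $H^2$ is purely of Hodge type $(1,1)$ and there is no moduli in the Hodge polynomial beyond $b_2$; the value of $b_2$ is part of the classification in each case. No step is genuinely hard: the corollary is essentially a bookkeeping consequence of Theorem \ref{mainthm} and the input from \cite{Ni, Ni2, FJM}.
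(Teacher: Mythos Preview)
Your proposal is correct and follows essentially the same approach as the paper: the corollary is stated without proof as ``an easy consequence'' of the fact from \cite{FJM} that mirror pairs share the invariants $(r,a,\delta)$, combined with the preceding observation that $(r,a,\delta)$ determines the Hodge polynomials of $C$ and $B$, whence Theorem~\ref{mainthm} gives the result. Your added justification that $H^2(B)$ is of pure type $(1,1)$ (since $B$ is Enriques or rational) is a useful detail the paper leaves implicit.
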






Another class of examples was the original motivation for this work. Let $\mathcal{C} \subseteq GL_n(\C)$ be a semisimple conjugacy class with determinant one and eigenvalue multiplicities $(n-1, 1)$.  Consider the the affine GIT quotient $\mathcal{M}_{n} =  U /\!\!/ PGL_n(\C)$, where $$U := \{  (A,B,C) \in GL_n(\C)^2 \times \mathcal{C} |  ABA^{-1}B^{-1} = C\},$$ and $PGL_n(\C)$ acts by conjugation. If $\Sigma$ is an genus one Riemann surface with a single point removed, then $\mathcal{M}_n$ parametrizes homomorphisms from $\pi_1(\Sigma)$ to $GL_n(\C)$ that send the positively oriented loop around the puncture to $\mathcal{C}$.  The character variety $\mathcal{M}_n$ is a smooth symplectic variety of dimension $2n$. 

When $n=1$, $\mathcal{M}_1 = \C^{\times} \times \C^{\times}$ with coordinate ring $\C[x^{\pm 1},y^{\pm 1}]$ and symplectic form $$\omega =  d log (x) \wedge d log (y)  = \frac{1}{xy} dx \wedge dy.$$ 
Hausel-Lettelier-Rodriguez-Villegas \cite{HLR} conjectured that  $\mathcal{M}_n$ has the same mixed Hodge polynomial as  $\mathcal{M}_1^{[n]}$  and that their E-polynomials agree, i.e.  $$MH(\mathcal{M}_1^{[n]}; x,y, -1) = MH(\mathcal{M}_n; x,y, -1).$$ More examples relating Hilbert schemes of points on surfaces with character varieties can be found in \cite{Gr}. 

An orientation reversing involution $\sigma$ of $\Sigma$ determines an anti-symplectic involution $\tau$ of $\mathcal{M}_n$ by precomposing with $\sigma_* \in Aut(\pi_1(\Sigma))$ and post-composing with the Cartan involution on $GL_n(\C)$ sending $A \mapsto (A^{-1})^T$  (see \cite{BW}). Examples of such involutions on $\mathcal{M}_1 = \C^{\times} \times \C^{\times} $ include $\tau(x,y) = (x^{-1},y)$ and $\tau(x,y) = (y^{-1},x^{-1})$. Inspired by \cite{HLR}, we had hoped that the mixed Hodge polynomials of $(\mathcal{M}_n)^\tau$ and $(\mathcal{M}_1^{[n]})^\tau$ would agree.  However, comparing our formula for $MH((\mathcal{M}_1^{[2]})^\tau )$ with a preliminary calculation of the E-polynomial $MH((\mathcal{M}_2)^\tau, x,y, -1)$ seems to show they do not equal.  We plan to return to this question in future work.

\end{document}